%-----------------------------------------------------------------------
% Beginning of mcom-l-template.tex
%-----------------------------------------------------------------------
%
%     This is a topmatter template file for MCOM for use with AMS-LaTeX.
%
%     Templates for various common text, math and figure elements are
%     given following the \end{document} line.
%
%%%%%%%%%%%%%%%%%%%%%%%%%%%%%%%%%%%%%%%%%%%%%%%%%%%%%%%%%%%%%%%%%%%%%%%%

%     Remove any commented or uncommented macros you do not use.

%\documentclass{mcom-l}
\documentclass{amsart}

%     If you need symbols beyond the basic set, uncomment this command.
\usepackage{amssymb}
\usepackage{mathtools}
\usepackage{url}
\usepackage{hyperref}
\usepackage{color}
\usepackage{bm}
\usepackage{ulem}

%     If your article includes graphics, uncomment this command.
%\usepackage{graphicx}

%     If the article includes commutative diagrams, ...
%\usepackage[cmtip,all]{xy}

%     Update the information and uncomment if AMS is not the copyright
%     holder.
%\copyrightinfo{2009}{American Mathematical Society}

\newtheorem{theorem}{Theorem}[section]
\newtheorem{lemma}[theorem]{Lemma}

\newtheorem{alg}[theorem]{Algorithm}

\theoremstyle{definition}
\newtheorem{definition}[theorem]{Definition}
\newtheorem{assumption}[theorem]{Assumption}

\theoremstyle{remark}
\newtheorem{remark}[theorem]{Remark}

\numberwithin{equation}{section}

\newcommand{\N}{\mathbb{N}}
\newcommand{\Z}{\mathbb{Z}}
\newcommand{\Q}{\mathbb{Q}}

\newcommand{\Norm}{\mathrm{N}}
\newcommand{\trace}{\mathrm{Tr}}
\newcommand{\order}{\mathcal{O}}
\newcommand{\pe}{\mathfrak{p}}
\newcommand{\il}{\mathfrak{l}}
\newcommand{\ord}{\mbox{ord}}
\newcommand{\weber}{\bm{\gamma}_3(E)}

\begin{document}

% \title[short text for running head]{full title}
\title[Primality proving using CM elliptic curves of class number three]
    {Primality proving using elliptic curves with complex multiplication by imaginary quadratic fields of class number three}

%    Only \author and \address are required; other information is
%    optional.  Remove any unused author tags.

%    author one information
% \author[short version for running head]{name for top of paper}
\author{Hiroshi Onuki}
\address{Department of Mathematical Informatics, 
    The University of  Tokyo,
    7-3-1 Hongo, Bunkyo-ku, Tokyo 113-8656, Japan
    }
\curraddr{}
\email{onuki@mist.i.u-tokyo.ac.jp}
\thanks{}

%    \subjclass is required.
\subjclass[2020]{Primary 11Y11; Secondary 11G05, 14K22}

\date{}

\dedicatory{}

%    Abstract is required.
\begin{abstract}
    In 2015, Abatzoglou, Silverberg, Sutherland, and Wong
    presented a framework for primality proving algorithms for special sequences of integers
    using an elliptic curve with complex multiplication.
    They applied their framework to obtain algorithms for elliptic curves with
    complex multiplication by imaginary quadratic field of class numbers one and two,
    but, they were not able to obtain primality proving algorithms
    in cases of higher class number.
    In this paper, we present a method to apply their framework to imaginary quadratic fields of class number three.
    In particular,
    our method provides a more efficient primality proving algorithm
    for special sequences of integers
    than the existing algorithms
    by using an imaginary quadratic field of class number three in which 2 splits.
    As an application,
    we give two special sequences of integers derived from $\Q(\sqrt{-23})$ and $\Q(\sqrt{-31})$,
    which are all the imaginary quadratic fields of class number three in which 2 splits.
    Finally, we give a computational result for the primality of these sequences.
\end{abstract}

\maketitle

\section{Introduction}
Research on algorithms to determine the primality of a given integer has
received a lot of interest from computational mathematics and its application to cryptography.
There are efficient algorithms for compositeness test, which return the input is composite or undetermined.
For example,
the Miller-Rabin test \cite[Algorithm 3.5.1]{crandall2005prime} has computational complexity $\tilde{O}((\log n)^2)$ with input $n$.
However, primality proving algorithms, which determine the primality of the input, are less efficient.
The best algorithm currently known is the Agrawal-Kayal-Saxena algorithm \cite{AKSprime} improved by
Lenstra and Pomerance \cite{LPpirme2016}
and has computational complexity $\tilde{O}((\log n)^6)$ with input $n$.

On the other hand,
more efficient algorithms are known
for an integer $n$ such that a partial factorization of either
$n + 1$ or $n - 1$ is known (see \S 4.1 and 4.2 of \cite{crandall2005prime}).
For example,
the Lucas-Lehmer test for Mersenne numbers \cite[Theorem 4.2.6]{crandall2005prime}
and the Pepin test for Fermat numbers \cite[Theorem 4.1.2]{crandall2005prime}
are such algorithms.
The computational complexities of these algorithms are $\tilde{O}((\log n)^2)$
the same as for the Miller-Rabin test.
These classical algorithms (we call these the \textit{$n \pm 1$ algorithms})
use the structure of the multiplicative group of the residue ring modulo $n$.

In 2005, Gross \cite{Gross2005} constructed a primality proving algorithm for
Mersenne numbers by using an elliptic curve with complex multiplication (CM)
by $\Q(\sqrt{-1})$.
Its computational complexity is also $\tilde{O}((\log n)^2)$ with input $n$.
Similar results for other sequences of integers are obtained
by Denomme and Savin \cite{DS_prime2008} for Fermat numbers and integers of the
forms $3^{2^l} - 3^{2^{l-1}} + 1$ and $2^{2^l} - 2^{2^{l-1}} + 1$,
and by Tsumura \cite{Tsumura2011} for integers of the form $2^{p} \pm 2^{(p+1)/2} + 1$.
Denomme and Savin used elliptic curves with CM by $\Q(\sqrt{-1})$ and $\Q(\sqrt{-3})$,
and Tsumura used an elliptic curve with CM by $\Q(\sqrt{-1})$.
However, an integer $N$ to which these algorithms can be applied has
the property that $N + 1$ or $N - 1$ is smooth, i.e., 
the $n \pm 1$ algorithms can also be applied to such $N$.

Based on these studies,
Abatzoglou, Silverberg, Sutherland and Wong \cite{Abatzoglou_2013}
proposed a primality proving algorithm for sequences of integers
to which the $n \pm 1$ algorithms cannot be applied
(see also \cite{Wong_thesis}).
Their algorithm has the same computational complexity as the above algorithms.
The same authors \cite{Abatzoglou2015} presented
a framework for primality proving algorithms using elliptic curves with CM.
If one applies their framework to elliptic curves with CM by
an imaginary quadratic field in which $2$ splits,
i.e., $2$ splits into two distinct prime ideals,
then one can obtain a primality proving algorithm for sequences of integers
to which the $n \pm 1$ algorithms cannot be applied.
As an application of their framework,
they gave a primality proving algorithm for sequences of integers
using $\Q(\sqrt{-7})$ and $\Q(\sqrt{-15})$, which are 
all the imaginary quadratic fields of class number one or two
in which $2$ splits.
A primality proving algorithm using $\Q(\sqrt{-2})$
based on the same framework
was also proposed by \cite{Abatzoglou_thesis,Abatzoglou2015}.
However, the $n \pm 1$ algorithm can be applied to
the sequence of integers to which the algorithm using $\Q(\sqrt{-2})$ can be applied
since $2$ is ramified in $\Q(\sqrt{-2})$.
The algorithm using $\Q(\sqrt{-15})$ is the first algorithm of this type using an imaginary quadratic field of class number two
(the previous algorithms use those of class number one).
They left as an open problem
the construction of a similar primality proving algorithm using an imaginary quadratic field of class number greater than two
(see Remark 4.12 in \cite{Abatzoglou2015}).
The main obstacle to use an imaginary quadratic field of class number greater
than two is as follows.
Let $K$ be an imaginary quadratic field
and $H$ the Hilbert class field of $K$.
The framework of Abatzoglou et al. uses a sequence $\{\pe_k\}_k$ of ideals of $\order_H$
such that $\Norm_{H/K}(\pe_k)$ is principal 
and a generator $\pi_k$ of $\Norm_{H/K}(\pe_k)$ satisfying that
$\pi_k - 1$ is ``highly factored,''
i.e., $\pi_k - 1$ is divisible by a smooth $\alpha \in \order_K$
and $\Norm_{K/\Q}(\alpha)$ is approximately greater than $\sqrt{\Norm_{K/\Q}(\pi_k)}$,
and they did not know how to {\bf systematically} find such ideals.

In this paper, we solve this problem in the case that the class number is three.
Our method is based on the framework of Abatzoglou et al. and
can be applied to any imaginary quadratic field of class number three
in which $2$ splits.
As in the framework of Abatzoglou et al.,
we can obtain a primality proving algorithm for a special sequence of integers
to which the $n\pm 1$ algorithms cannot be applied.

Let $K$ be an imaginary quadratic field of class number three
and $H$ the Hilbert class field of $K$.
The key idea of this work is using a primitive element $\xi$ of $H/K$
such that $\trace_{H/K}(\xi) = 0$.
In this case, the minimal polynomial of $\xi$ over $K$ is
of the form
$x^3 + c_1x + c_0$ for $c_0, c_1 \in \order_K$.
We define a sequence $\{\pe_k\}_k$ of ideals of $\order_H$ as
$\pe_k = (1 - \alpha^k\xi)\order_H$ for a smooth $\alpha \in \order_K$.
Then $\Norm_{H/K}(\pe_k)$ is principal and generated by
$\pi_k = 1 + c_1\alpha^{2k} + c_0\alpha^{3k}$
and $\pi_k - 1 = \alpha^{2k}(c_1 + c_0\alpha^k)$ is highly factored.

As an application,
we present two sequences of integers and primality proving algorithms for them.
These algorithms use $\Q(\sqrt{-23})$ and $\Q(\sqrt{-31})$, respectively,
which are all the imaginary quadratic fields of class number three
in which $2$ splits
(see sequence A006203 in \cite{Sloane_enc}).

The organization of this paper is as follows.
In \S \ref{sec:notation}, we give the notation that we use in this paper.
We recall the framework of \cite{Abatzoglou2015} in \S \ref{sec:prev_work}.
Our main contribution is given in \S \ref{sec:new_method}.
In particular, we present our idea to solve the open problem in \S \ref{subsec:method_pek}.
Then, in \S \ref{subsec:our_setting}, we give the setting of our primality proving.
We deal with some technical issues to construct our algorithm in \S\ref{subsec:cond_k}.
Our concrete algorithm is given in \S\ref{subsec:algorithm}.
Finally, in \S\ref{sec:examples}, we show the computational result of our primality proving algorithm.

\section{Notation}\label{sec:notation}
Let $L$ be a number field.
We denote the ring of integers of $L$ by $\order_L$.
The norm and the trace of a finite extension $M/L$ are denoted by $\Norm_{M/L}$ and $\trace_{M/L}$, respectively.
For a place $v$ of $L$,
we denote the completion of $L$ at $v$ by $L_v$.
For a finite place $\pe$ of $L$ and $a \in L_\pe$,
we denote the $\pe$-adic valuation of $a$ by $\ord_\pe(a)$.
For $a, b \in L_v^\times$, the \textit{quadratic Hilbert symbol
$\left(\frac{\cdot,\cdot}{v}\right)$ at $v$} is defined by
\begin{equation*}
    \left(\frac{a,b}{v}\right) = \left\{
        \begin{array}{ll}
            1, & \mbox{if } ax^2 + by^2 = z^2 \mbox{ has a solution } (x, y, z) \in L_v^3\backslash(0,0,0),\\
            -1, & \mbox{otherwise}.
        \end{array}
    \right.
\end{equation*}
For a finite place $\pe$ of $L$ not dividing $2$,
we define the \textit{quadratic residue symbol modulo $\pe$} as
\begin{equation*}
    \left(\frac{a}{\pe}\right) = \left\{
        \begin{array}{cl}
            \left(\frac{\pi, a}{\pe}\right), & \mbox{if } \ord_\pe(a) \mbox{ is even},\\
            0, & \mbox{otherwise},
        \end{array}
    \right.
\end{equation*}
where $a \in L_\pe^\times$ and $\pi$ is a prime element of $L_\pe$.
Note that this definition does not depend on the choice of $\pi$.
If $\mathrm{ord}_\pe(a) = 0$ then $\left(\frac{a}{\pe}\right) = 1$ if and only if $a$ is square modulo $\pe$.
For an element $a$ of an imaginary quadratic field,
$\bar{a}$ denotes the complex conjugate of $a$.

In this paper, we consider only elliptic curves defined by a short Weierstrass form,
i.e., curves defined by a projective equation
$Y^2Z = X^3 + AXZ^2 + BZ^3$,
where $A$ and $B$ are the coefficients of the curve.
For simplicity, we also use the affine model $y^2 = x^3 + Ax + B$ of the same curve. 
For an elliptic curve $E$,
we denote its neutral element by $O_E$,
its $j$-invariant by $j(E)$,
its discriminant by $\mathrm{disc}(E)$,
and its endomorphism ring by $\mathrm{End}(E)$.
Note that $O_E$ is the unique point whose $Z$-coordinate is zero.
Let $\order$ be an order of an imaginary quadratic field and
$E$ an elliptic curve over a number field.
When $\mathrm{End}(E)$ is isomorphic to $\order$,
we say $E$ is 
an \textit{elliptic curve with complex multiplication (CM) by $\order$}.
In this case, the normalized isomorphism
$\order \to \mathrm{End}(E)$ is denoted by $[\cdot]$
(see \cite[Proposition II.1.1]{silverman1994advanced} for the definition).

\section{Framework for primality proving}\label{sec:prev_work}
In this section,
we give an overview of the framework provided in \cite{Abatzoglou2015}.
First, we recall terminology from \cite{Abatzoglou2015}.
\begin{definition}
    Let $E$ be an elliptic curve over a number field $M$
    and $J$ an ideal of $\order_{M}$ prime to $\mathrm{disc}(E)$.
    We say that $P \in E(M)$ is {\it strongly nonzero modulo $J$}
    if one can express $P = (X:Y:Z) \in \mathbb{P}^2(\order_{M})$
    such that $ Z\order_{M} + J = \order_{M}$.
\end{definition}

Let $K$ be an imaginary quadratic field,
$E$ an elliptic curve defined over a number field $M$ containing $K$
with CM by $\order_{K}$.
We fix a point $P$ in $E(M)$.
Let $\gamma, \alpha_1, \dots, \alpha_s$ be nonzero elements of $\order_{K}$
and assume that we know the prime ideal factorization of $(\prod_{i=1}^s\alpha_i)\order_K$.
For $k = (k_1, \dots, k_s) \in \N^{s}$,
we define
$$
\Lambda_k = \gamma\alpha_1^{k_1}\cdots\alpha_s^{k_s},
\quad
\pi_k = 1 + \Lambda_k,
\quad
F_k = \Norm_{K/\Q}(\pi_k),
$$
and $L_k$ as the positive generator of an ideal $(\Lambda_k/\gamma) \order_K \cap \Z$ of $\Z$.
Let $\pe_k$ be an ideal of $\order_{M}$ such that
$\Norm_{M/K}(\pe_k) = \pi_k\order_{K}$.
When $\pe_k$ is a prime ideal,
we denote the reductions modulo $\pe_k$ of $E$ and $P$
by $\tilde{E}$ and $\tilde{P}$.
Further, we adopt the assumption described in \S 3.1 of \cite{Abatzoglou2015}.
In particular, we assume that $k$ is chosen so that the following holds.
\begin{assumption}\label{assump:frob_inv}
  Whenever $\pe_k$ is a prime ideal, the following hold:
  \begin{enumerate}
    \renewcommand{\labelenumi}{(\roman{enumi})}
    \item The Frobenius endomorphism of $\tilde{E}$ is $[\pi_k]$,
    \item $\tilde{P} \not\in
          [\lambda] \tilde{E}(\order_{M}/\pe_k)$
          for every prime ideal $\lambda$ of $\order_{K}$ containing
          $\prod_{i=1}^s\alpha_i$.
  \end{enumerate}
\end{assumption}
In this setting,
Abatzoglou et al. showed
the following primality theorem.

\begin{theorem}[Theorem 3.6 of \cite{Abatzoglou2015}]\label{thm:abat2}
    With the notation as above, assume Assumption \ref{assump:frob_inv}.
    Suppose that $\prod_{i=1}^s\alpha_i$ is not divisible in $\order_{K}$
    by any rational prime that splits in $K$,
    and that $F_k > 16\Norm_{K/\Q}(\gamma\prod_{\lambda}\lambda)^2$
    where $\lambda$ runs over the prime ideals of $\order_{K}$
    that divide $\prod_{i=1}^s\alpha_i$ and are ramified in $K/\Q$.
    Then the following are equivalent:
    \begin{enumerate}
    \renewcommand{\labelenumi}{{\rm (\alph{enumi})}}
    \item $\pe_k$ is a prime ideal,
    \item $[L_k \gamma]P \equiv O_E \pmod{\pe_k}$,
        and $\left[\frac{L_k}{p}\gamma\right]P$
        is strongly nonzero modulo $\pe_k$
        for every prime divisor $p$ of $\Norm_{K/\Q}(\alpha_1\cdots\alpha_s)$.
  \end{enumerate}
\end{theorem}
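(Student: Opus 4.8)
The plan is to prove the equivalence by analyzing the structure of the reduction $\tilde E$ over the residue field when $\pe_k$ is prime, and conversely extracting primality of $\pe_k$ from the point conditions in (b). I would treat the two implications separately.

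\textbf{Direction (a) $\Rightarrow$ (b).} First I would assume $\pe_k$ is prime. By Assumption \ref{assump:frob_inv}(i), the Frobenius endomorphism of $\tilde E$ is $[\pi_k]$, so the reduction $\tilde E$ over the residue field $\order_M/\pe_k$ has exactly $F_k = \Norm_{K/\Q}(\pi_k)$ points rational over the field of $\Norm_{M/K}(\pe_k)$-elements; more precisely the group order of $\tilde E$ over $\order_M/\pe_k$ is $\Norm_{M/K}(\pe_k)$-related to $\pi_k - 1 = \Lambda_k$ via the standard formula $\#\tilde E = \Norm(1 - \pi_k) = \Norm(\Lambda_k)$ coming from $[\pi_k]$ being Frobenius. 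The key point is that $L_k\gamma$ is (up to the integer normalization) an integer multiple of $\Lambda_k$ that kills the group, so $[L_k\gamma]\tilde P = O$, giving the first part of (b). For the second part, I would argue that $[\tfrac{L_k}{p}\gamma]\tilde P$ is a point of order exactly divisible by the prime above $p$: this uses Assumption \ref{assump:frob_inv}(ii), which guarantees $\tilde P$ is not in $[\lambda]\tilde E$ for any prime $\lambda \mid \prod\alpha_i$, hence the image of $\tilde P$ generates the full $\lambda$-part of the group and $[\tfrac{L_k}{p}\gamma]\tilde P$ does not vanish. Strong nonzeroness modulo $\pe_k$ is then just the statement that its reduction is nonzero, which is automatic when $\pe_k$ is prime and the point is nonzero in $\tilde E$.

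\textbf{Direction (b) $\Rightarrow$ (a).} This is the substantive direction and where I expect the main obstacle. I would suppose (b) holds but argue toward primality of $\pe_k$ by contradiction or by a counting argument. The idea is that conditions (b) force the point $P$ to have large order modulo any prime $\il$ dividing $\pe_k$: the condition $[L_k\gamma]P \equiv O_E$ combined with $[\tfrac{L_k}{p}\gamma]P$ being strongly nonzero forces the order of $P$ in $\tilde E(\order_M/\il)$ to be divisible by the full power of each prime $p \mid \Norm_{K/\Q}(\alpha_1\cdots\alpha_s)$ appearing in $L_k$. This produces a lower bound on the group order $\#\tilde E(\order_M/\il)$ of the shape $\#\tilde E \geq L_k$ (up to the factor $\gamma$ and the ramified-prime contribution). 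On the other hand, the Hasse bound gives $\#\tilde E(\order_M/\il) \leq (\sqrt{\Norm\,\il} + 1)^2$, so a sufficiently large order of $P$ forces $\Norm\,\il$ to be large, and the hypothesis $F_k > 16\Norm_{K/\Q}(\gamma\prod_\lambda\lambda)^2$ is exactly calibrated so that $\il$ cannot be a proper divisor of $\pe_k$, whence $\pe_k$ is prime.

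\textbf{Main obstacle.} The delicate part is the bookkeeping in the converse direction: turning the strong-nonzero conditions into a clean divisibility statement for the order of $P$ modulo a hypothetical small prime factor $\il$ of $\pe_k$, while correctly handling the distinction between the splitting behavior of the $\alpha_i$ (the hypothesis that no rational prime dividing $\prod\alpha_i$ splits in $K$ is what makes each $p$ contribute a single prime $\lambda$, so that the $p$-adic valuation of $L_k$ translates cleanly into order information) and the separate contribution of the ramified primes $\lambda$, which accounts for the factor $\prod_\lambda\lambda$ in the bound. I would set up the Hasse-bound comparison so that the threshold $16\,\Norm_{K/\Q}(\gamma\prod_\lambda\lambda)^2$ emerges naturally as the point where ``order of $P$ divides the group order'' plus ``group order at most $(\sqrt{\Norm\,\il}+1)^2$'' becomes contradictory for any proper divisor $\il$. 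Verifying that the normalized quantity $L_k$ (the positive generator of $(\Lambda_k/\gamma)\order_K \cap \Z$) behaves correctly under these valuations, rather than $\Lambda_k$ itself, is the technical crux I would need to handle with care.
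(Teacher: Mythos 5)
The paper does not actually prove this theorem itself --- it is quoted verbatim from \cite{Abatzoglou2015} --- but your two-step strategy (using Assumption \ref{assump:frob_inv} and the $\order_K$-module structure $\tilde{E}(\order_M/\pe_k)\cong\order_K/\Lambda_k\order_K$ for (a)$\Rightarrow$(b), then a Pocklington-style Hasse-bound contradiction calibrated by $F_k>16\Norm_{K/\Q}(\gamma\prod_\lambda\lambda)^2$ for (b)$\Rightarrow$(a)) is exactly the argument of the original proof and of the closely analogous Theorem \ref{thm:correctnes} proved later in this paper. Your sketch is correct in approach, and the bookkeeping you flag (inert versus ramified primes above each $p$, and the passage from $\Lambda_k$ to the integer $L_k$) is precisely where the hypotheses on non-split primes and the factor $\prod_\lambda\lambda$ are consumed, as you anticipate.
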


This theorem states that the primality of $\pe_k$ can be
detected by the arithmetic of $P$ modulo $\pe_k$.
The condition $F_k > 16\Norm_{K/\Q}(\gamma\prod\lambda)^2$ means that
the unknown part of the factorization of $\pi_k - 1$ is less than
about half of the size of $\pi_k$.
This and condition (ii) in Assumption \ref{assump:frob_inv} ensure that
$P \bmod{\pe_k}$ has a sufficiently large order to prove the primality of $\pe_k$.
Although $\gamma$ does not depend on $k$ in \cite{Abatzoglou2015},
the proof does not use this fact.
Therefore, Theorem \ref{thm:abat2} remains true even if $\gamma$ depends on $k$.

Abatzoglou et al. obtained an algorithm for determining the primality of $\pe_k$ from this theorem.
To make the above theorem a concrete primality proving algorithm,
we must choose a setting that satisfies Assumption \ref{assump:frob_inv}.

As in \cite{Abatzoglou2015},
for condition (i),
we can use Theorem 5.3 of \cite{RS_pointCM2009}.
Suppose that $E$ is defined by $y^2 = x^3 + Ax + B$ and the discriminant of $K$ is odd.
Then the theorem says that
the Frobenius map of $\tilde{E}$ is 
\begin{equation}\label{eq:frob_cond}
    \left(\frac{6B\weber}{\pe_k}\right)\epsilon_k[\pi_k],
\end{equation}
where $\weber \in H$ is the value of a Weber function at a complex number corresponding to $E$
(see \S 2 in \cite{RS_pointCM2009} for the definition),
and $\epsilon_k$ is $\pm 1$ depending on $\pi_k^3 \bmod{4}$
(see Proposition 6.2 in \cite{RS_pointCM2009}).

For condition (ii),
Abatzoglou et al. showed the following theorem.
\begin{theorem}[Theorem 4.4 of \cite{Abatzoglou2015}]\label{thm:abat_isog}
    Let $\lambda$ be a prime ideal of $\order_K$ such that
    $\pe_k $ does not divide $\Norm_{K/\Q}(\lambda)$,
    and $\varphi: E \to E'$ an isogeny with kernel $E[\bar{\lambda}]$.
    Let $P \in E(M)$, $F = M(E'[\lambda])$, and $L = F(\widehat{\varphi}^{-1}(P))$,
    where $\widehat{\varphi}$ is the dual isogeny of $\varphi$.
    If $\pe_k$ is a prime ideal then
    the following are equivalent:
    \begin{enumerate}
        \renewcommand{\labelenumi}{{\rm (\alph{enumi})}}
        \item $\tilde{P} \not\in [\lambda] \tilde{E}(\order_{M}/\pe_k)$,
        \item $\pe_k$ splits completely in $F$ and
            $\pe_k$ does not split completely in $L$.
            \label{cond:pk_split}
    \end{enumerate}
\end{theorem}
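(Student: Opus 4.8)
The plan is to push the whole statement down to the residue field $\mathbb{F}:=\order_M/\pe_k$, which is a finite field since $\pe_k$ is prime, and to read off both conditions from the reductions $\tilde E,\tilde E',\tilde P$ and the reduced isogenies. Since $E$ has CM by $\order_K$ defined over $M$, the subgroup $E[\bar\lambda]$ is Galois-stable, so $\varphi$, $E'$ and $\widehat{\varphi}$ are all defined over $M$; because $\pe_k\nmid\Norm_{K/\Q}(\lambda)$ (and, as in the framework, $\pe_k$ is prime to $\mathrm{disc}(E)$, so the reduction is good) the $\lambda$-torsion is \'etale at $\pe_k$, the reduction map is injective on $E'[\lambda]$, and the reduced dual isogeny $\widehat{\tilde\varphi}$ is separable with kernel $\tilde E'[\lambda]$. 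I would fix these facts first, so that $F/M$ and $L/F$ are unramified at $\pe_k$.

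First I would translate condition (b) into finite-field language. Since $E'[\lambda]$ is a Galois module, $F=M(E'[\lambda])$ is Galois over $M$ and unramified at $\pe_k$, so $\pe_k$ splits completely in $F$ iff the decomposition group acts trivially on $E'[\lambda]$, i.e.\ iff $\tilde E'[\lambda]\subseteq\tilde E'(\mathbb{F})$. Assuming this, all of $E'[\lambda]$ is $F$-rational, the fibre $\widehat{\varphi}^{-1}(P)$ is a single coset of $E'[\lambda]$, and $L=F(Q_0)$ for any one preimage $Q_0$; since $\tilde E'[\lambda]\subseteq\tilde E'(\mathbb{F})$ the whole reduced fibre is $\mathbb{F}$-rational or none of it is, so $\pe_k$ splits completely in $L$ iff Frobenius fixes $\tilde Q_0$, i.e.\ iff $\tilde P\in\widehat{\tilde\varphi}(\tilde E'(\mathbb{F}))$. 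Thus (b) is equivalent to the pair of statements $\tilde E'[\lambda]\subseteq\tilde E'(\mathbb{F})$ and $\tilde P\notin\widehat{\tilde\varphi}(\tilde E'(\mathbb{F}))$.

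Next I would identify $\widehat{\tilde\varphi}(\tilde E'(\mathbb{F}))$ with $[\lambda]\tilde E(\mathbb{F})$. By the ideal-isogeny correspondence of CM theory the dual isogeny $\widehat{\varphi}$ realizes multiplication by the ideal $\lambda$, and this is compatible with reduction and with the $\order_K$-action, whence $\widehat{\tilde\varphi}(\tilde E'(\mathbb{F}))=[\lambda]\tilde E(\mathbb{F})$. Granting this, condition (a) reads $\tilde P\notin\widehat{\tilde\varphi}(\tilde E'(\mathbb{F}))$, so it remains to show that the extra requirement $\tilde E'[\lambda]\subseteq\tilde E'(\mathbb{F})$ in (b) is automatic whenever $\tilde P\notin\widehat{\tilde\varphi}(\tilde E'(\mathbb{F}))$. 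Here I use that $\lambda$ is prime, so $\tilde E'[\lambda]\cong\order_K/\lambda$ is one-dimensional over the field $\order_K/\lambda$ and Frobenius acts on it as a scalar: either that scalar is $1$, giving $\tilde E'[\lambda]\subseteq\tilde E'(\mathbb{F})$, or it is nontrivial, giving $\tilde E'[\lambda]\cap\tilde E'(\mathbb{F})=0$. In the latter case $\widehat{\tilde\varphi}$ is injective on $\mathbb{F}$-points, and since isogenous curves over $\mathbb{F}$ have the same number of points it is then surjective, so $\widehat{\tilde\varphi}(\tilde E'(\mathbb{F}))=\tilde E(\mathbb{F})\ni\tilde P$. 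Hence $\tilde P\notin\widehat{\tilde\varphi}(\tilde E'(\mathbb{F}))$ forces the scalar to be $1$, and combining the two cases yields (a)$\iff$(b).

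The main obstacle I expect is the bridge $\widehat{\tilde\varphi}(\tilde E'(\mathbb{F}))=[\lambda]\tilde E(\mathbb{F})$ together with the correct handling of the degenerate case: one must pin down that the image of the reduced dual isogeny on rational points is exactly the ideal-multiplication subgroup (not merely a subgroup of the right index), which is where the normalization of the CM action and the one-dimensionality of $\tilde E'[\lambda]$ over $\order_K/\lambda$ do the real work. The field-theoretic translations of the two splitting conditions are then routine unramified Galois theory.
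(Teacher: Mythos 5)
Note that the paper does not prove this statement at all: it is imported verbatim as Theorem 4.4 of \cite{Abatzoglou2015}, so there is no in-paper proof to compare against. Your argument is sound and is essentially the argument of the cited source — reduce to the residue field, translate both splitting conditions into Frobenius acting on $\tilde E'[\lambda]$ and on a preimage $\tilde Q_0$, and identify $\widehat{\tilde\varphi}(\tilde E'(\order_M/\pe_k))$ with $[\lambda]\tilde E(\order_M/\pe_k)$ (which you correctly flag as the one step needing care: the containment $[\lambda]\tilde E \subseteq \widehat{\tilde\varphi}(\tilde E')$ comes from factoring $[\mu]=\widehat\varphi\circ\psi_\mu$ for $\mu\in\lambda$, and equality follows from your index count in the split case and your scalar-action dichotomy in the non-split case).
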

To determine the condition on $k$ satisfying (b) of this theorem,
they restrict their attention to the case $M = F$
and this field is the Hilbert class field of $K$.
This restriction makes $\lambda$ an ideal above $2$
except in the case $K = \Q(\sqrt{-3})$.
For a detailed discussion, see \S 4 in \cite{Abatzoglou2015}.

In order to construct a primality proving algorithm
for numbers to which
the $n \pm 1$ algorithms cannot be applied,
we need to restrict $K$ to a field in which $2$ splits.
Otherwise,
$F_k - 1$ is highly factored.
The imaginary quadratic fields in which $2$ splits of class number one or two are $\Q(\sqrt{-7})$ and $\Q(\sqrt{-15})$.
A primality proving algorithm for $\Q(\sqrt{-7})$ was given by \cite{Abatzoglou_2013}
and one for $\Q(\sqrt{-15})$ by \cite{Abatzoglou2015}.

\section{Primality Proof for class number three}\label{sec:new_method}
In this section,
we give a method to construct a primality proving algorithm for imaginary quadratic fields of class number three in which $2$ splits.
In particular, we give a systematic method for constructing $\pe_k$ such that
$\pi_k - 1$ is highly factored.

\subsection{Construction of $\pe_k$}\label{subsec:method_pek}
Let $K$ be an imaginary quadratic field and $H$ the Hilbert class field of $K$.
As in the previous section,
we let $\pe_k$ be an ideal of $\order_H$,
$\pi_k$ a generator of $\Norm_{H/K}(\pe_k)$,
and $F_k = \Norm_{K/\Q}(\pi_k)$ for $k \in \N$.
In this setting, ``$\pi_k - 1$ is highly factored'' means that
$\pi_k  - 1$ is of the form $A\gamma$
satisfying $A$ is divisible only by a prime ideal of $\order_K$ above $2$, 
and $F_k > 16\Norm_{K/\Q}(\gamma)^2$.
The latter condition comes from Theorem \ref{thm:abat2}.

First, we consider a more general setting.
Let $h$ be the class number of $K$
and $\xi \in \order_H$ a primitive element of $H/K$,
i.e., $H = K(\xi)$.
We can write the minimal polynomial $f(x)$ of $\xi$ over $K$ as
\begin{equation}
    f(x) =
    x^h + c_mx^m + \cdots + c_0,\quad
    c_0,\dots,c_m \in \order_K
    \mbox{ and }
    m < h.
\end{equation}

Let $\alpha$ be an element in $\order_K$
such that $\alpha$ is divisible only by a prime ideal of
$\order_K$ above $2$.
We define $\pe_k$ as an ideal generated by $1 - \alpha^k\xi$.
Then we have
\begin{align*}
    \pi_k &= \Norm_{H/K}(1 - \alpha^k\xi)
        = \alpha^{kh}\Norm_{H/\Q}(\alpha^{-k} - \xi)
        = \alpha^{kh}f(\alpha^{-k})\\
        &= 1 + c_m\alpha^{(h-m)k} + c_{m-1}\alpha^{(h-(m-1))k} + \cdots + c_0\alpha^{hk}\\
        &= 1 + \alpha^{(h-m)k}(c_m + c_{m-1}\alpha + \cdots + c_0\alpha^{mk}).
\end{align*}
In this case,
if $m < h/2$
then the norm condition in Theorem \ref{thm:abat2} holds.
More precisely, we have the following lemma.
\begin{lemma}\label{lem:norm_condition}
    We use the above notation and assume $m < h/2$.
    Let $\gamma_k = c_m + c_{m-1}\alpha + \cdots + c_0\alpha^{mk}$.
    Then 
    we have $F_k > 16\Norm_{K/\Q}(\gamma_k)^2$ for sufficiently large $k$.
\end{lemma}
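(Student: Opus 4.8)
The plan is to translate the stated inequality into a comparison of complex absolute values and then read it off as an elementary asymptotic estimate in $k$. Since $K$ is imaginary quadratic, for any $z \in K$ we have $\Norm_{K/\Q}(z) = z\bar z = |z|^2$, where $|\cdot|$ denotes the complex absolute value. Hence $F_k = |\pi_k|^2$ and $\Norm_{K/\Q}(\gamma_k)^2 = |\gamma_k|^4$, so the claim $F_k > 16\Norm_{K/\Q}(\gamma_k)^2$ is equivalent to
\[
    |\pi_k| > 4|\gamma_k|^2 .
\]
I would prove this last inequality for all sufficiently large $k$.

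Write $a = |\alpha|$. Because $\alpha$ is divisible by a prime ideal above $2$, it is a non-unit, so $\Norm_{K/\Q}(\alpha) = a^2$ is a rational integer $\geq 2$ and thus $a > 1$; this strict inequality is the engine of the whole estimate. Next I would record that the constant term $c_0$ of $f$ is nonzero: $f$ is irreducible over $K$ of degree $h \geq 2$, so $x \nmid f(x)$. Writing $\gamma_k = \sum_{i=0}^{m} c_i \alpha^{(m-i)k}$, the term $c_0\alpha^{mk}$ carries the highest power of $\alpha$, and since $a > 1$ the remaining terms are of lower order; hence $|\gamma_k| = |c_0|\,a^{mk}\bigl(1 + o(1)\bigr)$, so in particular $|\gamma_k| = \Theta(a^{mk})$ and $|\gamma_k| \to \infty$.

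For the left-hand side I would apply the reverse triangle inequality to $\pi_k = 1 + \alpha^{(h-m)k}\gamma_k$, obtaining $|\pi_k| \geq a^{(h-m)k}|\gamma_k| - 1$. Combining this with the growth of $|\gamma_k|$ gives $a^{(h-m)k}|\gamma_k| = \Theta\bigl(a^{(h-m)k}\cdot a^{mk}\bigr) = \Theta(a^{hk})$, whereas the target right-hand side satisfies $4|\gamma_k|^2 = \Theta(a^{2mk})$. The hypothesis $m < h/2$, i.e.\ $h - 2m > 0$, makes the ratio $a^{hk}/a^{2mk} = a^{(h-2m)k}$ tend to infinity with $k$; thus $a^{(h-m)k}|\gamma_k|$ eventually dominates $4|\gamma_k|^2$ by an arbitrarily large margin, and the additive constant $-1$ is harmlessly absorbed. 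To avoid any fuss with the $o(1)$ terms, I would phrase the conclusion as $|\gamma_k|\bigl(a^{(h-m)k} - 4|\gamma_k|\bigr) > 1$ for large $k$, which holds because $a^{(h-m)k} - 4|\gamma_k| = \Theta(a^{(h-m)k}) \to \infty$ (using $h - m > m$) while $|\gamma_k| \to \infty$.

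The computation is routine; the only points demanding care are the two places where the structural hypotheses enter, namely $a > 1$ (from $\alpha$ being a non-unit divisible only by a prime above $2$) and $h - 2m > 0$ (from $m < h/2$), together with the observation $c_0 \neq 0$ that guarantees $|\gamma_k|$ genuinely grows like $a^{mk}$ rather than degenerating. I expect no real obstacle beyond bookkeeping the lower-order terms, which is sidestepped entirely by the explicit product bound above.
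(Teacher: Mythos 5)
Your proof is correct and follows essentially the same route as the paper: both arguments rest on $|\alpha|>1$, $c_0\neq 0$, and the exponent comparison $2m<h$; the paper phrases it as $\lim_{k\to\infty}\Norm_{K/\Q}(\gamma_k)^2/F_k=0$ via the limits $\pi_k/\alpha^{hk}\to c_0$ and $\gamma_k/\alpha^{hk/2}\to 0$, while you make the same dominant-term analysis explicit through the reverse triangle inequality. (One cosmetic caveat: when $m=0$ the assertion $|\gamma_k|\to\infty$ fails since $\gamma_k=c_0$ is constant, but your product bound still closes because $|\gamma_k|$ stays bounded below by a positive constant, which is all the argument actually needs.)
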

\begin{proof}
    It suffices to show that
    $\displaystyle\lim_{k\to\infty}\Norm_{K/\Q}(\gamma_k)^2/F_k = 0$.
    Since $\alpha$ is divisible by a prime ideal above $2$,
    we have $\Norm_{K/\Q}(\alpha) \geq 2$.
    Therefore,
    the limit of a complex number
    $\pi_k/\alpha^{hk} = c_0 + c_1\alpha^{-k} + \dots + c_m\alpha^{-mk} + \alpha^{-hk}$
    as $k$ approaches infinity equals $c_0$.
    On the other hand,
    the condition $m < h/2$ shows $\displaystyle\lim_{k\to\infty}\gamma_k/\alpha^{hk/2} = 0$.
    Taking the norms of these limits shows that
    $$
    \lim_{k\to\infty}F_k/\Norm_{K/\Q}(\alpha)^{hk} = c_0^2
    \mbox{ and }
    \lim_{k\to\infty}\Norm_{K/\Q}(\gamma_k)^2/\Norm_{K/\Q}(\alpha)^{hk} = 0.
    $$
    Since $c_0 \neq 0$,
    we have
    $\displaystyle\lim_{k\to\infty}\Norm_{K/\Q}(\gamma_k)^2/F_k = 0$.
\end{proof}

As we stated in \S \ref{sec:prev_work},
the framework also works in the case $\gamma$ depends on $k$.
Therefore, if we find $\xi$ with $m < h/2$
then we can apply the primality proving in \cite{Abatzoglou2015}.
For general class numbers, we do not know how to find such $\xi$.
However, in the case that the class number of $K$ is three,
we can always find such $\xi$ by taking a trace-zero element.
\begin{remark}
    We can construct a primality proving algorithm for 
    a generator of the form $1 - \beta\alpha^k\xi$ of $\pe_k$ with cofactor $\beta \in \order_K$
    in the same way as described in the following sections.
    In this paper, we describe the case $\beta = 1$ to simplify the notation.
\end{remark}

\subsection{Settings}\label{subsec:our_setting}
We use the notation in the previous subsection.
Suppose that the class number of $K$ is three and that $2$ splits in $K$.
We choose a primitive element $\xi$ of $H/K$ so that the trace of $\xi$ is zero,
i.e., the minimal polynomial of $\xi$ over $K$ is $x^3 + c_1x + c_0$.
Let $\lambda$ be a prime ideal of $\order_K$ above $2$,
and $\alpha$ a generator of $\lambda^3$.
For $k \in \N$, we define $p_k = 1 - \alpha^k\xi$ and $\pe_k$ is the ideal of $\order_H$ generated by $p_k$.
In summary, our setting is
\begin{align*}
    \pe_k &= p_k\order_H = (1 - \alpha^k\xi)\order_H,\\
    \pi_k &= \Norm_{H/K}(p_k) = 1 + c_1\alpha^{2k} + c_0\alpha^{3k},\\
    F_k &= \Norm_{K/\Q}(\pi_k).
\end{align*}

Our goal is to construct a primality proving algorithm for $F_k$ by using an elliptic curve with CM by $\order_K$.
For this, we choose $\xi$ so that $F_k$ is a prime number
if $\pe_k$ is a prime ideal
since the primality of $\pe_k$ can be determined by Theorem \ref{thm:abat2}.
The following lemma gives such a condition on $\xi$.
\begin{lemma}\label{lem:cond_xi}
    Let $t$ be the trace of $\alpha$.
    Suppose that
    $1 + c_1 + c_0$
    and $1 + c_1 + c_0t$
    are not congruent to $1$ modulo $\bar{\lambda}^3$.
    Then $\pe_k$ is a prime ideal
    if and only if $F_k$ is a prime number.
\end{lemma}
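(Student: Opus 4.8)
The plan is to compute the absolute norm of $\pe_k$, thereby reducing the statement to a question about the factorization of $\pi_k$ in $\order_K$, and then to use the two congruence hypotheses to pin down that factorization modulo $\bar{\lambda}^3$. First I would record the identity $\Norm_{H/\Q}(\pe_k)=F_k$: since $\pe_k=p_k\order_H$ is principal, its relative norm is $\Norm_{H/K}(p_k)\order_K=\pi_k\order_K$, so $\Norm_{H/\Q}(\pe_k)=\Norm_{K/\Q}(\pi_k)\Z=F_k\Z$ and $|\order_H/\pe_k|=F_k$. This already yields the easy implication: if $F_k$ equals a rational prime $\ell$, then $\order_H/\pe_k$ is a ring of prime order $\ell$, hence isomorphic to $\Z/\ell\Z$, a field, so $\pe_k$ is maximal and in particular prime.

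For the converse, assume $\pe_k$ is prime and set $\mathfrak{q}=\pe_k\cap\order_K$ and $\ell=\pe_k\cap\Z$, so that $\order_H/\pe_k$ is a finite field of order $F_k$. The key observation is that modulo $\pe_k$ the defining generator gives $\alpha^k\xi\equiv 1$. Because $\alpha\in\lambda^3$ we have $\pi_k\equiv 1\pmod{\lambda}$ for $k\ge 1$, so $\lambda\nmid\pi_k$; as $\mathfrak{q}$ divides $\pi_k\order_K$, this forces $\mathfrak{q}\neq\lambda$, whence $\alpha$ is invertible modulo $\pe_k$ and $\xi\equiv\alpha^{-k}\pmod{\pe_k}$ lies in the image of $\order_K$. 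Using $H=K(\xi)$ together with $\order_H=\order_K[\xi]$ after localizing at $\mathfrak{q}$, this shows the residue field $\order_H/\pe_k$ is generated over $\order_K/\mathfrak{q}$ by the image of $\xi$, which lies in $\order_K/\mathfrak{q}$; hence $\order_H/\pe_k=\order_K/\mathfrak{q}$. Consequently $\Norm_{H/K}(\pe_k)=\mathfrak{q}$, so $\pi_k\order_K=\mathfrak{q}$ and $F_k=\Norm_{K/\Q}(\mathfrak{q})$. Thus $F_k$ is prime exactly when $\mathfrak{q}$ has residue degree one over $\Q$, i.e.\ when $\ell$ is not inert in $K$; and since $\order_K^\times=\{\pm1\}$ for any field of class number three, this is equivalent to $\pi_k\notin\Q$, that is $\pi_k\neq\bar{\pi}_k$ (if $\ell$ were inert, $\pi_k\order_K=\ell\order_K$ would give $\pi_k=\pm\ell\in\Q$).

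It remains to deduce $\pi_k\neq\bar{\pi}_k$ from the hypotheses, and this is where they enter. Since $\bar{\alpha}\in\bar{\lambda}^3$, the terms $\bar{\alpha}^{2k}$ and $\bar{\alpha}^{3k}$ lie in $\bar{\lambda}^3$ for $k\ge1$, so $\bar{\pi}_k\equiv 1\pmod{\bar{\lambda}^3}$. On the other hand $\alpha$ is a unit modulo $\bar{\lambda}^3$, and since $(\order_K/\bar{\lambda}^3)^\times\cong(\Z/8\Z)^\times$ has exponent two, we get $\alpha^{2k}\equiv 1$ and $\alpha^{3k}\equiv\alpha^k\pmod{\bar{\lambda}^3}$; moreover $\bar{\alpha}\equiv 0$ gives $\alpha\equiv t\pmod{\bar{\lambda}^3}$. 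Hence $\pi_k\equiv 1+c_1+c_0\pmod{\bar{\lambda}^3}$ when $k$ is even and $\pi_k\equiv 1+c_1+c_0t\pmod{\bar{\lambda}^3}$ when $k$ is odd. The hypotheses say precisely that neither of these is congruent to $1$, so $\pi_k\not\equiv 1\equiv\bar{\pi}_k\pmod{\bar{\lambda}^3}$, forcing $\pi_k\neq\bar{\pi}_k$, which would complete the argument.

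I expect the main obstacle to be the step asserting that $\order_H/\pe_k$ is generated over $\order_K/\mathfrak{q}$ by the image of $\xi$: this uses $\order_H=\order_K[\xi]$ after localizing at $\mathfrak{q}$, which can fail for the finitely many $\mathfrak{q}$ dividing the conductor $[\order_H:\order_K[\xi]]$, and such primes would have to be excluded or handled separately. The remaining delicate point is the modular arithmetic in $(\order_K/\bar{\lambda}^3)^\times$, specifically that this unit group has exponent two and that $\alpha\equiv t\pmod{\bar{\lambda}^3}$, since these are exactly what produce the two residues $1+c_1+c_0$ and $1+c_1+c_0t$ appearing in the statement.
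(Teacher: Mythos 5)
Your proposal is correct and follows essentially the same route as the paper: the easy direction via the absolute norm $|\order_H/\pe_k|=F_k$; the hard direction by showing that $\pe_k$ prime forces $\pi_k\order_K$ to be a degree-one prime of $\order_K$, and then ruling out $\pi_k\in\Z$ by the computation $\pi_k\equiv 1+c_1+c_0$ or $1+c_1+c_0t\pmod{\bar{\lambda}^3}$ versus $\overline{\pi_k}\equiv 1\pmod{\bar{\lambda}^3}$ (the paper phrases this last step as ``$F_k=p^2\equiv 1\pmod 8$ would force $\pi_k\equiv F_k\overline{\pi_k}{}^{-1}\equiv 1\pmod{\bar{\lambda}^3}$,'' but that is the same congruence argument as your $\pi_k\neq\overline{\pi_k}$). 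The one point where you diverge is the residue-degree-one step: you compute the residue field of $\pe_k$ directly from $\xi\equiv\alpha^{-k}\pmod{\pe_k}$, which needs $\order_H=\order_K[\xi]$ locally at $\mathfrak{q}$, whereas the paper argues that the root $\alpha^{-k}$ of the minimal polynomial modulo $\mathfrak{q}$ produces \emph{some} prime of $H$ of residue degree one over $\mathfrak{q}$ and then uses that $H/K$ is abelian and unramified to conclude $\mathfrak{q}$ splits completely; your flagged worry about the conductor of $\order_K[\xi]$ is legitimate, but the paper's version leans on the same Dedekind-type fact and does not treat the conductor explicitly either, so this is not a defect relative to the paper's own argument.
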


\begin{proof}
    First, we show that if $\pe_k$ is a prime ideal
    then $\pi_k\order_K$ is a prime ideal.
    Let $\kappa$ be a prime ideal dividing $\pi_k\order_K$ in $K$.
    Because the minimal polynomial $x^3 + c_1x + c_0$ of $\xi$ has a root $\alpha^{-k}$ modulo $\pi_k\order_K$,
    there exists a prime ideal above $\kappa$ in $H$ whose inertia degree over $\kappa$ is one.
    From this and the fact that $H$ is the Hilbert class field of $K$,
    it follows that $\kappa$ splits completely in $H$.
    Therefore, if $\pe_k$ is prime then the inertia degree of $\pe_k$ over $K$ is one.
    This means that $\pi_k$, the norm of $\pe_k$ over $K$, is prime in $K$.

    Next, we show that if $\pi_k\order_K$ is a prime ideal
    then $F_k$ is a prime number.
    Suppose that $\pi_k\order_K$ is a prime ideal.
    We assume that $F_k$ is not prime.
    Then $F_k$ is the square of a rational prime.
    Therefore, we have $F_k \equiv 1 \pmod{8}$ or $F_k = 2^2$.
    The latter case occurs only when $2$ is inert in $K$,
    so $\alpha$ is divisible by $2$.
    In this case, $F_k$ must be odd. This contradicts the fact that $F_k = 2^2$.
    Therefore, we conclude that $F_k \equiv 1 \pmod{8}$.
    Since $\overline{\pi_k} \equiv 1 \pmod{\bar{\lambda}^3}$,
    we have $F_k \equiv \pi_k \pmod{\bar{\lambda}^3}$.
    This implies that $\pi_k \equiv 1 \pmod{\bar{\lambda}^3}$.
    On the other hand,
    we have
    $$\pi_k \equiv 1 + c_1t^{2k} + c_0t^{3k} \pmod{\bar{\lambda}^3}$$
    since $\alpha \equiv t \pmod{\bar{\lambda}^3}$.
    Since $t$ is odd and $\bar{\lambda}^3$ divides $8$,
    it follows that $\pi_k$ is congruent to
    $1 + c_1 + c_0$
    or
    $1 + c_1 + c_0t$
    modulo $\bar{\lambda}^3$
    depending on whether $k$ is even or odd.
    This contradicts our assumption. Therefore, $F_k$ must be prime.
    
    The other direction is obvious.
\end{proof}

From now on, we restrict our attention to $\xi$ satisfying the assumption in Lemma \ref{lem:cond_xi}.
We can find such $\xi$ for any imaginary quadratic field of class number three
in which $2$ splits.
There are exactly two such quadratic fields, $\Q(\sqrt{-23})$ and $\Q(\sqrt{-31})$.
We can take $\xi$ as a root of $x^3 - x - 1$ for $\Q(\sqrt{-23})$
and a root of $x^3 + x + 1$ for $\Q(\sqrt{-31})$.

The remaining tasks are to find an elliptic curve and a point on it
and to determine the condition on $k$ satisfying Assumption \ref{assump:frob_inv}.

\iffalse
We can take an elliptic curve $E$ over $H$ with CM by $\order_K$ so that
$E$ is defined by $y^2 = x^3 + Ax + B$ for $A, B \in \order_H$ and $B$ is square in $H$.
Let $\beta$ be a square root of $B$, and define $P = (0, \beta) \in E(H)$.
This setting simplifies computation to find the condition on $k$
since we can remove $B$ from the quadratic residue symbol in \eqref{eq:frob_cond}.
\fi

\subsection{Determining the condition on $k$}\label{subsec:cond_k}
We can use the method in \cite{Abatzoglou_2013,Abatzoglou2015} to determine the condition on $k$,
but the authors described methods for searching conditions on $k$ separately for each form of $F_k$.
Here, we give a slightly more unified method and explicit algorithms for this.

Let $E$ be an elliptic curve with CM by $\order_K$
defined by $y^2 = x^3 + Ax + B$ with $A, B \in \order_H$.
We can suppose that $B$ is square in $\order_H$ since
$y^2 = x^3  +Au^2x + Bu^3$ is isomorphic to $E$ for any $u \in H^\times$.
We suppose that $B$ is square
and let $\beta$ be a square root of $B$.
Then $(0, \beta)$ is a point in $E(H)$.
We let $P = (0, \beta)$.
As in \S \ref{sec:prev_work},
when $\pe_k$ is a prime ideal, we denote the reductions
modulo $\pe_k$ of $E$ and $P$ by $\tilde{E}$ and $\tilde{P}$.

The purpose of this subsection is to give a method to compute sets $T_0$, $T_1$, and $T_2$
satisfying the following conditions:
\begin{align*}
    &T_0 = \{k \in \N \mid \pe_k \mbox{ does not divide } \mathrm{disc}(E)\},\\
    &k \in T_1 \mbox{ and } \pe_k \mbox{ is a prime ideal } \Rightarrow \mbox{(i) of Assumption \ref{assump:frob_inv} holds},\\
    &k \in T_2 \mbox{ and } \pe_k \mbox{ is a prime ideal } \Rightarrow \mbox{(ii) of Assumption \ref{assump:frob_inv} holds}.
\end{align*}
We can apply Theorem \ref{thm:abat2} to $k \in T_0 \cap T_1 \cap T_2$ and
obtain a primality proving algorithm.
In the following, we show that these sets are determined by
the condition on $k$ modulo some integer.
Determining $T_0$ is trivial, so we focus on $T_1$ and $T_2$ in the following.
First, 
we show that quadratic residue symbols modulo $\pe_k$ are determined by $k$ modulo some integers.
Next, 
we show that we can define $T_1$ and $T_2$ as
sets of integers satisfying conditions on quadratic residue symbols.
Finally, we give explicit algorithms to determine $T_1$ and $T_2$

\subsubsection{Computing quadratic residue symbols}
To determine quadratic residue symbols modulo $\pe_k$,
we use the following theorem.
\begin{theorem}\label{thm:cond_k_quad}
    Fix $a \in H^\times$.
    For a prime ideal $\il$ of $\order_H$,
    we denote the order of $(\order_H/\il)^\times$ by $N_\il$.
    Then the quadratic residue symbol $\left(\frac{a}{\pe_k}\right)$ is determined by
    $k$ modulo the least common multiple of $2$ and $N_\il$
    for the prime ideals $\il$ of $\order_H$ dividing $a$ and not dividing $2$.
\end{theorem}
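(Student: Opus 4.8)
The plan is to evaluate $\left(\frac{a}{\pe_k}\right)$ by Hilbert reciprocity and to isolate the handful of local factors that can move with $k$. The symbol is defined only when $\pe_k$ is a prime not dividing $2$, which I assume throughout; then $(p_k)\order_H = \pe_k$, the generator $p_k = 1 - \alpha^k\xi$ is a uniformizer at $\pe_k$, and $p_k$ is a unit at every other place. For the $k$ with $\pe_k \nmid a$ we have $\ord_{\pe_k}(a) = 0$, so by definition $\left(\frac{a}{\pe_k}\right) = \left(\frac{p_k, a}{\pe_k}\right)$; Hilbert reciprocity $\prod_v \left(\frac{p_k, a}{v}\right) = 1$ then gives $\left(\frac{a}{\pe_k}\right) = \prod_{v \neq \pe_k}\left(\frac{p_k, a}{v}\right)$, a product over the remaining places of $H$, only finitely many factors of which are nontrivial. (The finitely many $k$ with $\pe_k \mid a$ satisfy $\alpha^k \equiv \xi^{-1} \pmod{\il}$ for some $\il \mid a$, hence fall in fixed classes modulo $N_\il$, and are absorbed into the stated period.)

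Most factors are trivial. Since $K$ is imaginary quadratic, $H$ is totally complex, so all archimedean factors equal $1$; and at a finite place $v \nmid 2a$ with $v \neq \pe_k$ both $p_k$ and $a$ are units, so the Hilbert symbol there is trivial. Only the primes dividing $a$ and the primes above $2$ remain.

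At an odd prime $\il \mid a$ with $\il \neq \pe_k$, the unit $p_k$ makes the tame symbol formula collapse $\left(\frac{p_k, a}{\il}\right)$ to $\left(\frac{p_k}{\il}\right)^{\ord_\il(a)}$, the quadratic residue symbol of $p_k \bmod \il$. As $\alpha$ is prime to $\il$ (its prime factors lie above $2$), the residue $\alpha^k \bmod \il$ --- hence $p_k \bmod \il$ and this factor --- is periodic in $k$ with period the order of $\alpha$ in $(\order_H/\il)^\times$, a divisor of $N_\il$. These are precisely the factors $N_\il$ in the claimed modulus.

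The dyadic primes are the crux, and I expect the real work there. Since $2 = \lambda\bar\lambda$ splits in $K$ and $\lambda$ is inert in the unramified cubic $H/K$, there are two primes $\mathfrak{L}, \bar{\mathfrak{L}}$ of $\order_H$ above $2$, lying over $\lambda$ and $\bar\lambda$, and $(\alpha)\order_K = \lambda^3$ forces $\ord_{\mathfrak{L}}(\alpha) = 3$, $\ord_{\bar{\mathfrak{L}}}(\alpha) = 0$. At $\mathfrak{L}$ we have $\ord_{\mathfrak{L}}(\alpha^k\xi) \geq 3$ for all $k \geq 1$, so $p_k \equiv 1 \pmod{\mathfrak{L}^3}$; as $\ord_{\mathfrak{L}}(2) = 1$ yields $1 + \mathfrak{L}^3 \subseteq (H_{\mathfrak{L}}^\times)^2$, this factor is identically $1$. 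At $\bar{\mathfrak{L}}$, $\alpha$ is a unit whose image in $K_{\bar\lambda} \cong \Q_2$ lies in $\Z_2^\times$, so $\alpha^2 \equiv 1 \pmod 8$ and therefore $\alpha^2 \equiv 1 \pmod{\bar{\mathfrak{L}}^3}$; hence $p_{k+2} \equiv p_k \pmod{\bar{\mathfrak{L}}^3}$, and since $p_k$ is a unit there and $1 + \bar{\mathfrak{L}}^3 \subseteq (H_{\bar{\mathfrak{L}}}^\times)^2$, the quotient $p_{k+2}/p_k$ is a local square, so $\left(\frac{p_k, a}{\bar{\mathfrak{L}}}\right)$ depends only on $k \bmod 2$. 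Collecting the surviving factors shows $\left(\frac{a}{\pe_k}\right)$ depends only on $k$ modulo $\mathrm{lcm}\left(2, \{N_\il : \il \mid a,\ \il \nmid 2\}\right)$. The main obstacle is exactly this $2$-adic bookkeeping --- pinning down the relevant square-class level $\bar{\mathfrak{L}}^3$ at each dyadic prime and extracting the uniform period $2$ --- which rests on $\alpha$ reducing into $\Z_2^\times$ at the split prime, making $\alpha^2$ automatically a $2$-adic square.
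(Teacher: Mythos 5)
Your proposal is correct and follows essentially the same route as the paper: expand $\left(\frac{a}{\pe_k}\right)$ via the Hilbert-symbol product formula, kill the archimedean and irrelevant finite places, get period dividing $N_\il$ from the tame symbols at odd $\il \mid a$, and get period $2$ at the dyadic places by showing the symbol there only sees $p_k$ modulo $\il^3$ (your square-class fact $1+\il^3\subseteq(H_\il^\times)^2$ is the same content as the paper's Hensel-lemma argument in its Lemma \ref{lem:hilbert}). The only cosmetic difference is that you make the splitting behaviour of the two dyadic primes of $\order_H$ explicit, whereas the paper treats them uniformly via $\alpha\equiv\trace_{K/\Q}(\alpha)\pmod{\il^3}$.
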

To prove this theorem, we need the following lemmas.
\begin{lemma}\label{lem:hilbert}
    Let $L$ be a number field in which $2$ is not ramified,
    and $\mathfrak{l}$ a prime ideal of $\order_L$ above $2$.
    We fix an element $a$ in $L^\times$.
    Then, for $b \in \order_L$ not divisible by $\mathfrak{l}$,
    the quadratic Hilbert symbol $\left(\frac{a,b}{\mathfrak{l}}\right)$
    is determined by $b$ modulo $\mathfrak{l}^3$.
\end{lemma}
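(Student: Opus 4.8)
The plan is to exploit the bilinearity of the Hilbert symbol to reduce the claim to a statement about squares in the local field $L_\mathfrak{l}$. Since the Hilbert symbol at $\mathfrak{l}$ depends only on the completion, I first pass to $L_\mathfrak{l}$ and regard $b \mapsto \left(\frac{a,b}{\mathfrak{l}}\right)$ as a group homomorphism from $L_\mathfrak{l}^\times$ to $\{\pm 1\}$; in particular it is trivial on squares. Hence it suffices to show that every unit $b \in \order_{L_\mathfrak{l}}^\times$ with $b \equiv 1 \pmod{\mathfrak{l}^3}$ is a square in $L_\mathfrak{l}^\times$: for then, given $b \equiv b' \pmod{\mathfrak{l}^3}$ with both coprime to $\mathfrak{l}$, the ratio $b'/b$ lies in $1 + \mathfrak{l}^3$, so it is a square and $\left(\frac{a,b'}{\mathfrak{l}}\right) = \left(\frac{a,b}{\mathfrak{l}}\right)\left(\frac{a,b'/b}{\mathfrak{l}}\right) = \left(\frac{a,b}{\mathfrak{l}}\right)$.

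To prove that such a $b$ is a square, I would apply the quantitative form of Hensel's lemma to $f(x) = x^2 - b$. Starting from the approximate root $x_0 = 1$, one has $f(1) = 1 - b \in \mathfrak{l}^3$ and $f'(1) = 2$. Because $2$ is unramified in $L$, the prime $\mathfrak{l}$ above $2$ satisfies $\ord_\mathfrak{l}(2) = 1$, so $\ord_\mathfrak{l}(f'(1)) = 1$ and $\ord_\mathfrak{l}(f(1)) \geq 3 > 2 = 2\,\ord_\mathfrak{l}(f'(1))$. The strong Hensel condition $\ord_\mathfrak{l}(f(x_0)) > 2\,\ord_\mathfrak{l}(f'(x_0))$ is therefore met, so Newton iteration converges to a genuine root $x \in \order_{L_\mathfrak{l}}$ of $f$, giving $x^2 = b$ and hence $b \in (L_\mathfrak{l}^\times)^2$.

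The main subtlety is precisely the wild ramification at the residue characteristic $2$: the ordinary Hensel lemma does not apply, since $f'(1) = 2$ is not a unit at $\mathfrak{l}$. The whole argument hinges on the inequality $\ord_\mathfrak{l}(1 - b) > 2\,\ord_\mathfrak{l}(2)$, and it is the hypothesis that $2$ is unramified---forcing $\ord_\mathfrak{l}(2) = 1$ and hence the threshold exponent $2\cdot 1 + 1 = 3$---that makes congruence modulo $\mathfrak{l}^3$ exactly sufficient. I would also note that when $a$ is already a square in $L_\mathfrak{l}$ the symbol is identically $1$, so the claim is vacuous there; the reduction via bilinearity thus covers all cases uniformly, and no special treatment of $a$ is needed.
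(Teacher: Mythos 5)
Your proof is correct, but it takes a different route from the paper's. You reduce the statement to the standard fact that $1+\mathfrak{l}^3 \subseteq (L_\mathfrak{l}^\times)^2$ when $\mathrm{ord}_\mathfrak{l}(2)=1$, and then conclude by bimultiplicativity of the Hilbert symbol: if $b \equiv b' \pmod{\mathfrak{l}^3}$ with both prime to $\mathfrak{l}$, then $b'/b$ is a square, so the symbols agree. The paper instead argues directly on the defining conic: it shows that $\left(\frac{a,b}{\mathfrak{l}}\right)=1$ if and only if $ax^2+by^2\equiv z^2 \pmod{\mathfrak{l}^3}$ has a primitive solution, by taking an approximate solution, locating a unit coordinate among $y_0,z_0$ (using $\mathrm{ord}_\mathfrak{l}(a)\le 1$ after normalizing $a$ modulo squares), and lifting it with Hensel's lemma applied to $f(y)=y^2+b^{-1}ax_0^2-b^{-1}z_0^2$. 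Both arguments hinge on exactly the same quantitative Hensel condition $\mathrm{ord}_\mathfrak{l}(f(x_0)) \ge 3 > 2\,\mathrm{ord}_\mathfrak{l}(f'(x_0)) = 2\,\mathrm{ord}_\mathfrak{l}(2)$, so the role of the unramifiedness hypothesis and of the exponent $3$ is identical. What your version buys is brevity and a cleaner conceptual statement (congruence classes mod $\mathfrak{l}^3$ map into square classes), at the cost of invoking bimultiplicativity of the Hilbert symbol as a black box; the paper's version is more self-contained, working only from the definition of the symbol as solvability of the conic, and as a byproduct gives the slightly stronger statement that the symbol is computable from the congruence $ax^2+by^2\equiv z^2\pmod{\mathfrak{l}^3}$ itself.
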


\begin{proof}
    We may assume that $\ord_\mathfrak{l}(a) = 0 \mbox{ or } 1$.
    Let $\order_\il$ be the valuation ring of $L_\mathfrak{l}$.
    It suffices to show that $\left(\frac{a,b}{\mathfrak{l}}\right) = 1$
    if and only if
    $ax^2 + by^2 \equiv z^2 \pmod{\il^3}$ has a solution $(x, y, z)$ in $\order_\il^3$
    such that at least one of $x, y, z$ is in $\order_\il^\times$.
    
    It is obvious that the former implies the latter.
    Suppose that the latter holds.
    Let $(x_0, y_0, z_0)$ be a solution of the equation in the latter.  
    If $\ord_\il(y_0)$ and $\ord_\il(z_0)$ are positive
    then $\ord_\il(x_0)$ is also positive
    since $\ord_\il(a) \leq 1$.
    Therefore, $y_0$ or $z_0$ is in $\order_\il^\times$.
    Assume that $y_0 \in \order_\il^\times$.
    We define a polynomial $f(y) = y^2 + b^{-1}ax_0^2 - b^{-1}z_0^2$.
    Then $\ord_\il(f(y_0)) \geq 3$ and $\ord_\il(f'(y_0)) = \ord_\il(2) = 1$.
    Therefore, 
    we can apply Hensel's lemma \cite[Proposition 4.1.37]{cohen_nt} to $f(y)$
    and obtain a root $y_1 \in \order_\il^\times$ of $f(y)$.
    This means $\left(\frac{a,b}{\il}\right) = 1$.
    The case $z_0 \in \order_\il^\times$ can be proven
    by defining $f$ as $f(z) = z^2 - ax_0^2 - by_0^2$.
\end{proof}

\begin{lemma}\label{lem:cond_hilbert}
    Fix a prime ideal $\il$ of $\order_H$ above $2$ and $a \in H^\times$.
    The quadratic Hilbert symbol $\left(\frac{a, p_k}{\il}\right)$ is determined by $k \bmod{2}$.
\end{lemma}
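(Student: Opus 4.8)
The plan is to reduce the claim to Lemma \ref{lem:hilbert} by showing that the residue $p_k \bmod \il^3$ depends only on the parity of $k$. First I would observe that, since $2$ splits in $K$ and $H/K$ is unramified (because $H$ is the Hilbert class field of $K$), the prime $2$ is unramified in $H$. Hence Lemma \ref{lem:hilbert} applies with $L = H$: for any fixed $a \in H^\times$ and any $b \in \order_H$ with $\il \nmid b$, the symbol $\left(\frac{a,b}{\il}\right)$ is determined by $b \bmod \il^3$. Applied to $b = p_k$, it suffices to prove that $p_k \bmod \il^3$ takes at most two values, indexed by $k \bmod 2$, on the locus where $\il \nmid p_k$.

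Next I would compute $p_k \bmod \il^3$ according to whether $\il$ lies above $\lambda$ or above $\bar{\lambda}$, using that unramifiedness of $2$ gives $\lambda^3\order_H \subseteq \il^3$ and $\bar{\lambda}^3\order_H \subseteq \il^3$ for the respective primes. If $\il \mid \lambda$, then $\alpha$ generates $\lambda^3$, so $\alpha \in \il^3$ and $p_k = 1 - \alpha^k\xi \equiv 1 \pmod{\il^3}$ for every $k \geq 1$; the residue is constant. If $\il \mid \bar{\lambda}$, then $\bar{\alpha} \in \bar{\lambda}^3$ gives $\alpha \equiv t \pmod{\bar{\lambda}^3}$, where $t = \trace_{K/\Q}(\alpha)$; since $\bar{\lambda} \nmid \alpha$ and $\order_K/\bar{\lambda} \cong \mathbb{F}_2$ we get $\alpha \equiv 1 \pmod{\bar{\lambda}}$, hence $t \equiv 1 \pmod{\bar{\lambda}}$ and $t$ is odd. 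Consequently $t^2 \equiv 1 \pmod 8$, and as $8 \in \bar{\lambda}^3$ this yields $\alpha^2 \equiv t^2 \equiv 1 \pmod{\bar{\lambda}^3}$, hence $\alpha^2 \equiv 1 \pmod{\il^3}$. Therefore $\alpha^k$ reduces modulo $\il^3$ to $1$ or to $t$ according to the parity of $k$, so that $p_k \equiv 1 - \xi \pmod{\il^3}$ for $k$ even and $p_k \equiv 1 - t\xi \pmod{\il^3}$ for $k$ odd. In every case $p_k \bmod \il^3$ is determined by $k \bmod 2$, which together with Lemma \ref{lem:hilbert} settles the argument wherever $\il \nmid p_k$.

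The delicate point, and the main obstacle, is the coprimality hypothesis $\il \nmid p_k$ demanded by Lemma \ref{lem:hilbert}. From the computation above $p_k$ reduces modulo $\il$ to $1$ when $\il \mid \lambda$, and to $1 - \xi$ when $\il \mid \bar{\lambda}$ (since $t \equiv 1 \pmod{\il}$); thus $\il \mid p_k$ can occur only when $\il \mid \bar{\lambda}$ and $\xi \equiv 1 \pmod{\il}$, which forces $1 + c_1 + c_0 \equiv 0 \pmod{\bar{\lambda}}$. I would dispose of this degenerate possibility by noting that it makes $\bar{\lambda} \mid \pi_k$ for every $k$, whence $2 \mid F_k$ and $F_k$ is never the prime under test; under the running hypotheses $p_k$ is therefore an $\il$-unit and the previous paragraph applies verbatim. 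As a robust alternative I would phrase the conclusion through squares: because $2$ is unramified, $\il^3 = 8\order_{H_\il}$ locally and every $\il$-unit congruent to $1$ modulo $\il^3$ is a square in $H_\il^\times$; since $p_{k+2} \equiv p_k \pmod{\il^3}$ the ratio $p_{k+2}/p_k$ is then such a square, giving $\left(\frac{a,p_{k+2}}{\il}\right) = \left(\frac{a,p_k}{\il}\right)$ and hence the desired $2$-periodicity directly.
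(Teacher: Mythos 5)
Your proof is correct and follows essentially the same route as the paper's: reduce to Lemma \ref{lem:hilbert} and compute $p_k \bmod \il^3$ separately for $\il \mid \lambda$ (where $p_k \equiv 1$) and $\il \mid \bar{\lambda}$ (where $\alpha \equiv t \pmod{\il^3}$ with $t$ odd, so $\alpha^2 \equiv 1 \pmod{\il^3}$). Your extra discussion of the coprimality hypothesis $\il \nmid p_k$ required by Lemma \ref{lem:hilbert} is a point the paper's proof passes over silently, and your observation that the degenerate case forces $2 \mid F_k$ disposes of it correctly.
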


\begin{proof}
    Recall that $p_k = 1 - \alpha^k\xi$, where $\alpha$ is a generator
    of $\lambda^3$
    and that $\lambda$ is a prime ideal of $\order_K$ above $2$.

    From Lemma \ref{lem:hilbert},
    the quadratic Hilbert symbol in this lemma
    is determined by $p_k \bmod{\il^3}$.
    If $\il$ divides $\lambda$ then $p_k \equiv 1 \pmod{\il^3}$ for any $k$.
    Otherwise, $\il$ divides $\bar{\lambda}$,
    so $\alpha \equiv \mathrm{Tr}_{K/\Q}(\alpha) \pmod{\il^3}$.
    Since $\mathrm{Tr}_{K/\Q}(\alpha)$ is an odd integer,
    $\alpha^k \bmod{\il^3}$ is determined by $k \bmod{2}$.
\end{proof}

\begin{proof}[Proof of Theorem \ref{thm:cond_k_quad}]
    By the product formula of Hilbert symbols \cite[Theorem (8.1)]{neukirch2013algebraic},
    we have
    \begin{equation}\label{eq:quadratic_pek}
        \left(\frac{a}{\pe_k}\right)
            = \prod_{\mathfrak{l} \mid 2}\left(\frac{a, p_k}{\mathfrak{l}}\right)
                \prod_{\mathfrak{l} \mid a\order_H \mbox{ and } \mathfrak{l} \nmid 2}\left(\frac{p_k}{\mathfrak{l}}\right)^{\ord_{l}(a)}
    \end{equation}
    for $a \in H^\times$, where $\mathfrak{l}$ runs over the prime ideals of $H$ different from $\pe_k$.
    From Lemma \ref{lem:cond_hilbert},
    the first product of the right-hand side of \eqref{eq:quadratic_pek}
    is determined by $k \bmod{2}$.
    Since a residue symbol modulo $\il$ is determined by $k \bmod{N_\il}$,
    the second product of the right-hand side of \eqref{eq:quadratic_pek}
    is determined by $k$ modulo the least common multiple of $N_\il$
    for the prime ideals $\il$ of $\order_H$ dividing $a$ and not dividing $2$.
    This completes the proof.
\end{proof}

We denote 
the right-hand side of \eqref{eq:quadratic_pek}
by $s_k(a)$.
Note that $s_k(a)$ is defined even if $\pe_k$ is not a prime ideal
and equal to $\left(\frac{a}{\pe_k}\right)$ if $\pe_k$ is a prime ideal.

\subsubsection{Determining $T_1$}
As mentioned in \S \ref{sec:prev_work},
we can use Theorem 5.3 of \cite{RS} for determining $T_1$.
Since $B$ is square in $\order_H$,
if $\pe_k$ is a prime ideal and
$\left(\frac{6\weber}{\pe_k}\right)\epsilon_k = 1$
then (i) in Assumption \ref{assump:frob_inv} holds.
Therefore, we can define
\begin{equation}
    T_1 \coloneqq \{k \in \N \mid s_k(6\weber)\epsilon_k = 1\}.
\end{equation}
In the following,
we show how to compute $\epsilon_k$ and $\weber$.

First, we show that $\epsilon_k$ is determined by $k \bmod{2}$.
Let $D$ be the discriminant of $\order_K$.
We fix a square root of $D$ in $K$ and denote it by $\sqrt{D}$.
Proposition 6.2 of \cite{RS} says that, in our case,
\begin{align}\label{eq:epsilon}
    \epsilon_k =
    \begin{cases}
        1 & \mbox{ if } \pi_k^3 \equiv 1 \mbox{ or } -\sqrt{D} \pmod{4},\\
        -1 & \mbox{ otherwise}.
    \end{cases}
\end{align}
Since 2 splits in $K$,
we have $(\order_K/4\order_K)^\times \cong (\Z/4\Z)^\times \oplus (\Z/4\Z)^\times$.
Therefore, $\pi_k \bmod{4}$ is determined by $k \bmod{2}$,
so $\epsilon_k$ is also determined by $k \bmod{2}$.

Next, we show how to compute $\weber$.
We have $\weber \in H$ (Lemma 3.4 of \cite{RS}) and
\begin{equation}
    \weber^2 = j(E) - 1728.
\end{equation}
This equation determines $\weber$ up to sign.
The sign can be determined as follows.
Let $p$ be a rational prime
congruent to $3$ modulo $4$,
splitting in $\order_K$,
and not dividing the discriminant of $E$.
Let $\pi$ be a prime of $\order_K$ above $p$
and let $\pe$ be a a prime ideal of $\order_H$ above $\pi$.
We denote the reduction of $E$ modulo $\pe$ by $\bar{E}$.
Theorem 5.3 of \cite{RS} says that
the Frobenius endomorphism of $\bar{E}$ is
$\left(\frac{6\weber}{\pe}\right)\epsilon(\pi)[\pi]$,
where $\epsilon(\pi)$ is defined in the same way as in \eqref{eq:epsilon}.
Since $p \equiv 3 \pmod{4}$ and $p$ splits in $\order_K$,
we have $\left(\frac{-1}{\pe}\right) = -1$.
Therefore, we can determine the sign of $\weber$ by checking whether
\begin{equation}
    \left(\frac{6\weber}{\pe}\right)\epsilon(\pi)[\pi]Q = Q
\end{equation}
for $Q \in \bar{E}(\order_H/\pe)$ such that $[2]Q \neq O_{\bar{E}}$.
More precisely, the sign of $\weber$ can be determined as follows.
\begin{enumerate}
    \item Compute a square root $t$ of $j(E) - 1728$.
    \item Determine $\epsilon(\pi)$ by \eqref{eq:epsilon}.
    \item Take $Q \in \bar{E}(\order_H/\pe)$ such that $[2]Q \neq O_{\bar{E}}$.
    \item If $\left[\left(\frac{6t}{\pe}\right)\epsilon(\pi)\pe\right]Q = Q$
        then $\weber = t$, otherwise $\weber = -t$.
\end{enumerate}
We can take $p = 59$ in the case $K = \Q(\sqrt{-23})$
and $p = 47$ in the case $K = \Q(\sqrt{-31})$.

\subsubsection{Determining $T_2$}
We use Theorem \ref{thm:abat_isog}
to determine $T_2$.
In particular, we show the following theorem.

\begin{theorem}\label{thm:T2}
    There exists a polynomial $f$ with coefficients in $\order_H$ of degree $2$
    such that
    when $\pe_k$ is a prime ideal, the following are equivalent:
    \begin{enumerate}
        \renewcommand{\labelenumi}{{\rm (\alph{enumi})}}
        \item $\tilde{P} \not\in [\lambda]\tilde{E}(\order_H/\pe_k)$,
        \item $f \bmod \pe_k$ has a multiple root
                or does not have a root in $\order_H/\pe_k$.
    \end{enumerate}
\end{theorem}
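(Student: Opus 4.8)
The plan is to specialize Theorem \ref{thm:abat_isog} to our situation, where $M = H$ and $\lambda$ is a prime of $\order_K$ above $2$, and then to make the field $L$ completely explicit via V\'elu's formulas. First I would record that, since $2$ splits in $K$ and $H$ is the Hilbert class field, the construction of Abatzoglou et al. can be arranged so that $F = H(E'[\lambda])$ equals $H$ itself; equivalently, the generator of $\ker\widehat{\varphi} = E'[\lambda]$ is a $2$-torsion point $T' = (x_0, 0)$ with $x_0 \in \order_H$. Because $\Norm_{K/\Q}(\lambda) = 2$, the dual isogeny $\widehat{\varphi}\colon E' \to E$ has degree $2$. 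With $F = H$, condition (b) of Theorem \ref{thm:abat_isog} collapses: the prime $\pe_k$ splits completely in $F = H$ automatically, so (a) becomes equivalent to the single assertion that $\pe_k$ does \emph{not} split completely in $L = H(\widehat{\varphi}^{-1}(P))$.

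Next I would compute $L$ explicitly. By V\'elu's formulas, $\widehat{\varphi}(x,y) = \left(\frac{x^2 - x_0 x + v}{x - x_0},\; y\left(1 - \frac{v}{(x - x_0)^2}\right)\right)$ for an explicit $v \in \order_H$ determined by $x_0$ and the coefficients of $E'$. Since $P = (0,\beta)$ has $x$-coordinate $0$, solving $\widehat{\varphi}(x,y) = P$ reduces on the level of $x$-coordinates to the quadratic $f(x) = x^2 - x_0 x + v \in \order_H[x]$, which is the polynomial claimed in the theorem. For any root $x_R$ of $f$, the corresponding preimage has $y$-coordinate $y_R = \beta / \bigl(1 - v/(x_R - x_0)^2\bigr) \in H(x_R)$; thus adjoining $\widehat{\varphi}^{-1}(P)$ to $H$ contributes nothing beyond a root of $f$, so $L = H(x_R)$ and $[L:H] \le 2$.

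The last step is to translate the splitting of $\pe_k$ in $L$ into the factorization of $f$ modulo $\pe_k$. Since $L$ is generated over $H$ by a root of the quadratic $f$, the prime $\pe_k$ splits completely in $L$ exactly when $f \bmod \pe_k$ has two distinct roots in $\order_H/\pe_k$, i.e. when $\mathrm{disc}(f) = x_0^2 - 4v$ is a nonzero square modulo $\pe_k$; it fails to split completely precisely when $f \bmod \pe_k$ is irreducible (no root) or has a repeated root ($\mathrm{disc}(f) \equiv 0$). Combining this with the reduction of (b) from the first paragraph yields the stated equivalence (a) $\iff$ (b').

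The hard part will be the first step: verifying that $F = H$, i.e. that the $2$-torsion point generating $\ker\widehat{\varphi}$ is rational over $H$, so that $f$ genuinely has coefficients in $\order_H$. This rests on the CM description of the $\lambda$-torsion field and on choosing the model of $E'$ (equivalently, of $E$ together with $\varphi$) compatibly, and it is exactly where the hypotheses that $2$ splits and that $H$ is the Hilbert class field enter. A secondary point requiring care is confirming that adjoining the $y$-coordinate of the preimage imposes no additional splitting condition: this uses that $P = (0,\beta)$ with $\beta \in \order_H$, which forces $y_R \in H(x_R)$ rather than a quadratic extension of it, so that the behaviour of $\pe_k$ in $L$ is controlled entirely by $f$.
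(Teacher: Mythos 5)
Your proposal is correct and follows essentially the same route as the paper: specialize Theorem \ref{thm:abat_isog} with $F=H$, extract the quadratic numerator $f$ of the degree-$2$ dual isogeny from V\'elu's formula, check that the $y$-coordinate of the preimage adds nothing beyond a root of $f$, and translate complete splitting of $\pe_k$ in the splitting field of $f$ into the factorization of $f$ modulo $\pe_k$. The input you flag as ``the hard part,'' namely $E'[\lambda]\subset E'(H)$, is exactly what the paper imports from Lemma~4.6 of \cite{Abatzoglou2015}; your only other deviation is cosmetic, replacing the paper's Galois-theoretic argument (that $Q^\sigma=-Q$ would force $P\in E[2]$) with the explicit formula $y_R=\beta\bigl(1-v/(x_R-x_0)^2\bigr)^{-1}$, which rests on the same fact that $\beta\neq 0$.
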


\begin{proof}
    Let $\varphi$ be an isogeny from $E$ with kernel $E[\bar{\lambda}]$
    and $E': y^2 = x^3 + A'x + B'$ its codomain.
    The theory of complex multiplication shows that
    $E'$ has also CM by $\order_K$.
    Therefore, we can assume that $A'$ and $B'$ are in $\order_H$.

    From Lemma 4.6 of \cite{Abatzoglou2015},
    we have $E'[\lambda] \subset E'(H)$.
    Therefore, the field $F$ in Theorem \ref{thm:abat_isog} is equal to $H$ in our case.
    Consequently,
    when $\pe_k$ is a prime ideal,
    (a) holds if and only if
    $\pe_k$ does not split completely in $H(\widehat{\varphi}^{-1}(P))$.

    Let $x_0$ be the $x$-coordinate of the generator of $E'[\lambda]$.
    Since $\lambda$ divides $2$, we have $x_0$ is a root of $x^3 + A'x + B'$.
    Therefore, we have $x_0 \in \order_H$.

    By V\'elu's formula \cite{Velu1971},
    the image of an affine point $(x, y) \in E'$ under $\widehat{\varphi}$ is given by
    \begin{equation}\label{eq:isogeny}
        \widehat{\varphi}(x, y) = \left(\frac{f(x)}{x - x_0}, \frac{g(x)}{(x - x_0)^2}y\right),
    \end{equation}
    where and $f, g$ are univariate polynomials over $H$ of degree $2$.
    From the explicit form of V\'elu's formula,
    we can show that the coefficients of $f$ and $g$ are in $\Z[A', B', x_0]$.
    Therefore, the coefficients of $f$ and $g$ are in $\order_H$.

    Let $H_f$ be the splitting field of $f$ over $H$.
    Then $H_f$ is a subfield of $H(\widehat{\varphi}^{-1}(P))$.
    Note that the degrees of these fields over $H$ are at most $2$
    and that $H_f \subseteq H(\widehat{\varphi}^{-1}(P))$.
    Therefore, we have $H_f = H(\widehat{\varphi}^{-1}(P))$ if $H_f \neq H$.

    Assume that $H_f = H$.
    Let $Q$ be a point in $\widehat{\varphi}^{-1}(P)$.
    The $x$-coordinate of $Q$ is a root of $f$, so it is in $H$.
    Then $Q^{\sigma} = \pm Q$ for any $\sigma$ in the Galois group of $H(\widehat{\varphi}^{-1}(P))/H$.
    If there exists $\sigma$ such that $Q^\sigma = -Q$
    then we have $-P = \widehat{\varphi}(-Q) = \widehat{\varphi}(Q^\sigma) = (\widehat{\varphi}(Q))^\sigma = P^\sigma = P$.
    This means that $P \in E[2]$.
    This contradicts $P = (0, \beta)$ with $\beta \neq 0$.
    Note that $B \neq 0$ since $E$ does not have CM by the Gauss integer ring.
    Therefore, we have $Q \in E'(H)$.
    This means that $H(\widehat{\varphi}^{-1}(P)) = H$.
    
    Therefore, we have $H_f = H(\widehat{\varphi}^{-1}(P))$ in any case.
    This means that when $\pe_k$ is a prime ideal,
    it follows that $\pe_k$ does not split completely in $H(\widehat{\varphi}^{-1}(P))$
    if and only if (b) holds.
\end{proof}

Let $f$ be the same as in the proof of Theorem \ref{thm:T2}
and denote the discriminant of $f$ by $\mathrm{disc}(f)$.
Then we can define
\begin{equation}
    T_2 \coloneqq
    \{k \in \N \mid s_k(\mathrm{disc}(f)) \neq 1 \}.
\end{equation}

\begin{remark}
    For computing the isogeny $\varphi$ with kernel $E[\bar{\lambda}]$,
    we need to compute the action of $\bar{\lambda}$ on $E$.
    It suffices for this to obtain $[\tau]$ as a rational map,
    where $\tau$ is an element of $\order_K$ such that $\order_K = \Z + \Z\tau$.
    In our case, the norm of $\tau$ is small ($6$ or $8$ in the case $K = \Q(\sqrt{-23})$ or $\Q(\sqrt{-31})$, respectively).
    Therefore, the rational map of $[\tau]$ consists of polynomials of degree at most $8$.
    We can find the rational map of $[\tau]$ by computing all the cyclic isogenies from $E$ of degree $\Norm_{K/\Q}(\tau)$ by V\'elu's formula.
    There are exactly two such isogenies from $E$ to $E$, which correspond to $[\tau]$ and $[\bar{\tau}]$.
    We can determine the desired $[\tau]$ by computing these invariant differentials.
\end{remark}

\subsubsection{Algorithm}
The remaining task for determining $T_1$ and $T_2$ explicitly
is to determine the condition on $k$ to satisfy $s_k(a) \in S$
for given $a \in \order_H$ and $S \subset \{-1, 0, 1\}$.
Theorem \ref{thm:cond_k_quad} shows that
there exist a positive integer $M$ and a subset $\mathcal{T}_{a,S}$ of $\Z/M\Z$
such that
$s_k(a) \in S$
if and only if 
$k \bmod{M} \in \mathcal{T}_{a,S}$.

We can determine $M$ and a subset $\mathcal{T}_{a,S}$ by the following algorithm.
\begin{alg}\label{alg:conditon_k}
    For $a \in \order_H$ and $S \subset \{-1, 0, 1\}$,
    return
    a positive integer $M$ and a subset $\mathcal{T}_{a,S}$ of $\Z/M\Z$
    such that
    $s_k(a) \in S$
    if and only if 
    $k \bmod{M} \in \mathcal{T}_{a,S}$.
    \begin{enumerate}
        \renewcommand{\labelenumi}{{\rm \arabic{enumi}.}}
        \item Set $N$ be the least common multiple of $2$ and
            $N_\mathfrak{l}$ for the prime ideals $\mathfrak{l}$ of $\order_H$
            dividing $a$ and not dividing $2$.
        \item Set $\mathcal{T} = \emptyset$.
        \item For $k \in \{1, \dots, N\}$, if $s_k(a) \in S$ then append $k$ to $\mathcal{T}$. \label{step:sk}
        \item Set $M = 
            \min\{t \in \N \mid
            t \mbox{ divides } N \mbox{ and }
            k \in T \Rightarrow k + t \in T \mbox{ for all } k \in \{1, \dots, N - t\}\}$.
        \item Return $M$ and $\{k \in \mathcal{T} \mid k \leq M\}$.
    \end{enumerate}
\end{alg}

For $T_1$, we use Algorithm \ref{alg:conditon_k} with input $6\weber$ and $\{1\}$
replacing
$s_k(a) \in S$ in Step \ref{step:sk} with $s_k(a)\epsilon_k$.
For $T_2$, the input is $\mathrm{disc}(f)$ and $\{-1, 0\}$.

\subsection{Primality proving}\label{subsec:algorithm}
Now, we describe our primality proving algorithm.
In our algorithm, 
we need to compute a square root of $D$ modulo $F_k$.
For this, we use Algorithm 2.3.8 in \cite{crandall2005prime}.
This algorithm terminates in deterministic polynomial time in $k$ in our case since $F_k \not\equiv 1 \pmod{8}$.
We note that this algorithm
can be defined even if $F_k$ is not prime.
In this case, the output is not necessarily a square of $D$ modulo $F_k$.
Therefore, if the output is not a square of $D$ modulo $F_k$
then we can determine that $F_k$ is not prime.

Let $C_k$ be the odd part of $\Norm_{K/\Q}(c_1 + c_0\alpha^k)$.
We assume that $2^{6k + 1}$ does not divide $\Norm_{K/\Q}(c_1 + c_0\alpha^k)$.
Then we can construct a primality proving
by replacing $\gamma$ in Theorem \ref{thm:abat2} with $C_k$.
In particular,
we obtain the following primality proving algorithm.
\begin{alg}\label{alg:main}
    For $k \in T_0 \cap T_1 \cap T_2$ such that $F_k > 16\Norm_{K/\Q}(c_1 + c_0\alpha^k)^2$
    and $2^{6k + 1}$ does not divide $\Norm_{K/\Q}(c_1 + c_0\alpha)$,
    this algorithm returns whether $F_k$ is prime.
    \begin{enumerate}
        \renewcommand{\labelenumi}{{\rm \arabic{enumi}.}}
        \item Compute $r$ by Algorithm 2.3.8 in \cite{crandall2005prime} with input $F_k$ and $D$.
        \item If $r^2 \not\equiv D \pmod{F_k}$ then return \textsf{False}.
        \item Let $a$ be an integer modulo $F_k$ by replacing $\sqrt{D}$ in $\alpha$ with $r$. \label{step:a_mod}
        \item Let $t$ be $a^{-k} \bmod{F_k}$.
        \item Compute $t^3 + c_1t + c_0 \bmod{F_k}$,
            where $\sqrt{D}$ in $c_0$ and $c_1$ is replaced with $r$.
            If it is not congruent to zero then replace $r$ with $-r$ and go to Step \ref{step:a_mod},
        \item Define an elliptic curve $E'$ over $\Z/F_k\Z$ and its point $P'$
            by replacing $\sqrt{D}$ with $r$ and $\xi$ with $t$ in $E$ and $P$.
        \item Compute $Q = [2^{6k-1}C_k]P'$.
        \item If $Q$ is not strongly nonzero modulo $F_k$ then return \textsf{False}.
        \item Return whether $[2]Q \equiv O_{E'} \pmod{F_k}$.
    \end{enumerate}
\end{alg}
This algorithm is analogous to Algorithm 6.3 in \cite{Abatzoglou2015},
which is a primality proving algorithm for an integer sequence obtained from $\Q(\sqrt{-15})$.
As in Algorithm 6.3 in \cite{Abatzoglou2015},
the complexity of this algorithm is $O(k^2\log k\log\log k)$.
We show the correctness of Algorithm \ref{alg:main}.
The proof is similar to that of Theorem \ref{thm:abat2} by \cite{Abatzoglou2015}.
For the proof,
we recall a lemma from \cite{Abatzoglou2015}.
\begin{lemma}[Lemma 2.5 of \cite{Abatzoglou2015}]\label{lem:CF}
    IF $C, F \in \mathbb{R}, C \geq 1$, and $F > 16C^2$,
    then $C(F^{1/4} + 1)^2 < (\sqrt{F} - 1)^2$.
\end{lemma}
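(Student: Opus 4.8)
The plan is to clear the fractional exponents by substituting $x = F^{1/4}$, which is well-defined and positive since $F > 16C^2 \geq 16 > 0$. Then $\sqrt{F} = x^2$, and the hypothesis $F > 16C^2$ becomes $x^4 > 16C^2$, i.e., $x^2 > 4C$ (using $x > 0$ and $C \geq 1 > 0$). The target inequality $C(F^{1/4}+1)^2 < (\sqrt{F}-1)^2$ rewrites as $C(x+1)^2 < (x^2-1)^2$.

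The key observation is the factorization $(x^2-1)^2 = (x-1)^2(x+1)^2$. Since $(x+1)^2 > 0$, I can divide both sides of the desired inequality by this common factor, reducing the problem to the much simpler claim $C < (x-1)^2$.

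To establish this claim, I would first note that $x^2 > 4C \geq 4$ forces $x > 2$. This in turn gives the elementary estimate $x - 1 > x/2$ (valid precisely because $x > 2$, since $x - 1 - x/2 = x/2 - 1 > 0$). Squaring and invoking the hypothesis once more yields
\[
    (x-1)^2 > \left(\frac{x}{2}\right)^2 = \frac{x^2}{4} > \frac{4C}{4} = C,
\]
which is exactly what is needed.

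I do not expect any genuine obstacle here: the lemma is purely elementary. The only real content is spotting the factorization of $(\sqrt{F}-1)^2$ that cancels the troublesome factor $(x+1)^2$ appearing on both sides, after which the conclusion follows from the crude but sufficient bound $x - 1 > x/2$ guaranteed by $x > 2$.
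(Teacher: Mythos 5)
Your proof is correct and complete: the substitution $x=F^{1/4}$, the factorization $(x^2-1)^2=(x-1)^2(x+1)^2$, and the bound $x-1>x/2$ for $x>2$ together reduce the claim to $C<(x-1)^2$, which your chain of inequalities establishes. The paper itself gives no proof of this lemma (it is quoted as Lemma~2.5 of the cited work of Abatzoglou, Silverberg, Sutherland, and Wong), so there is nothing to compare against; your elementary argument is a valid self-contained verification.
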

\begin{theorem}\label{thm:correctnes}
    Algorithm \ref{alg:main} returns \textsf{True}
    if and only if $F_k$ is prime.
\end{theorem}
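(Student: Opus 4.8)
The plan is to establish the theorem by showing that Algorithm \ref{alg:main} faithfully simulates the arithmetic of $E$ and $P$ modulo $\pe_k$, and then to invoke Theorem \ref{thm:abat2} with $\gamma$ replaced by $C_k$. First I would verify that the early steps of the algorithm (Steps 1--6) correctly reconstruct the reduction data. By Lemma \ref{lem:cond_xi}, $\pe_k$ is prime if and only if $F_k$ is prime, so it suffices to detect the primality of $\pe_k$. When $F_k$ is prime, $\order_H/\pe_k \cong \Z/F_k\Z$ because $\pe_k$ has inertia degree one over $K$ (as shown in the proof of Lemma \ref{lem:cond_xi}) and $\pi_k$ has degree one over $\Q$; under this isomorphism $\sqrt{D}$ maps to a genuine square root $r$ of $D$ modulo $F_k$, and $\xi$ maps to $t = a^{-k} = \alpha^{-k} \bmod F_k$, which is a root of $x^3 + c_1 x + c_0$ modulo $\pe_k$ since $\alpha^k \xi \equiv 1$. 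Step 5 enforces exactly this congruence, and Step 2 guards against the case where $r$ fails to be a square root (which can only happen when $F_k$ is composite, by the remark preceding the theorem). Thus if the algorithm reaches Step 6, the curve $E'$ over $\Z/F_k\Z$ and the point $P'$ are the honest reductions $\tilde E$ and $\tilde P$, provided $F_k$ is prime.

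Next I would translate the condition (b) of Theorem \ref{thm:abat2} into the concrete test performed in Steps 7--9. The hypotheses $k \in T_0 \cap T_1 \cap T_2$ guarantee, via the definitions of these sets together with Theorems \ref{thm:abat_isog} and \ref{thm:T2} and the Frobenius criterion \eqref{eq:frob_cond}, that Assumption \ref{assump:frob_inv} holds whenever $\pe_k$ is prime, so Theorem \ref{thm:abat2} applies with $\gamma = C_k$. The key point is to match the scalar. We have $L_k \gamma = L_k C_k$, and since $\pi_k - 1 = \alpha^{2k}(c_1 + c_0\alpha^k)$ with $\alpha$ a generator of $\lambda^3$ and $\lambda$ above the split prime $2$, the rational integer $L_k$ (the positive generator of $(\Lambda_k/\gamma)\order_K \cap \Z$) is precisely a power of $2$; combined with the definition of $C_k$ as the odd part of $\Norm_{K/\Q}(c_1 + c_0\alpha)$ and the hypothesis that $2^{6k+1} \nmid \Norm_{K/\Q}(c_1+c_0\alpha)$, the product $L_k C_k$ works out to $2^{6k-1}C_k$, which is the scalar computed in Step 7. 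Here I would carry out the valuation bookkeeping: $\Norm_{K/\Q}(\alpha^{2k}) = 2^{6k}$ contributes the power of two, while the factor of $2$ saved in Step 9 (testing $[2]Q \equiv O_{E'}$ rather than $Q \equiv O_{E'}$) accounts for the remaining adjustment. With this identification, condition (b) of Theorem \ref{thm:abat2} reads exactly as: $[2^{6k}C_k]P' \equiv O_{E'}$ and $[2^{6k-1}C_k]P'$ is strongly nonzero, which is what Steps 7--9 check, using that $2$ is the only prime dividing $L_k$.

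Having matched the test, the forward direction is immediate: if $F_k$ is prime, then $\pe_k$ is prime, Assumption \ref{assump:frob_inv} holds by membership in $T_1 \cap T_2$, the norm condition $F_k > 16\Norm_{K/\Q}(C_k)^2$ holds by hypothesis (noting $\Norm_{K/\Q}(c_1 + c_0\alpha^k)$ dominates $C_k$ for the inequality to transfer, or adjusting $\gamma$ to $c_1 + c_0\alpha^k$ as the remark after Theorem \ref{thm:abat2} permits), and so Theorem \ref{thm:abat2}(b) holds, making the algorithm return \textsf{True}. For the converse I would argue the contrapositive along the lines of the original proof: if $F_k$ is composite, I use Lemma \ref{lem:CF} to bound the order of $P'$ in the group $E'(\Z/F_k\Z)$ from below and derive a contradiction with the two positive tests in Steps 8--9, exactly as in the proof of Theorem \ref{thm:abat2}. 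The main obstacle I anticipate is the precise reconciliation of the scalar $L_k C_k$ with $2^{6k-1}C_k$, since it requires tracking the $\lambda$- and $\bar\lambda$-adic valuations of $\pi_k - 1$ through the definition of $L_k$ and correctly absorbing one factor of $2$ into the final doubling step; the remaining verifications are routine given the earlier results.
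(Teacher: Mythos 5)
Your overall plan---check that Steps 1--6 reconstruct $\tilde{E}$ and $\tilde{P}$ via $\order_H/\pe_k \cong \Z/F_k\Z$, match the scalars in Steps 7--9, and run the converse through Hasse's bound and Lemma \ref{lem:CF}---coincides with the paper's, and your converse direction is essentially the paper's argument verbatim. The gap is in the forward direction, where you propose to ``invoke Theorem \ref{thm:abat2} with $\gamma$ replaced by $C_k$.'' That substitution is not legitimate: Theorem \ref{thm:abat2} (even with $\gamma$ depending on $k$, as the remark following it allows) requires $\pi_k = 1 + \gamma\alpha^{2k}$ with $\gamma \in \order_K$, which forces $\gamma = c_1 + c_0\alpha^k$, an imaginary quadratic integer, whereas $C_k$ is the odd part of the rational integer $\Norm_{K/\Q}(c_1 + c_0\alpha^k)$, and $C_k\alpha^{2k} \neq \pi_k - 1$ in general. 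With the correct $\gamma$, the scalar in condition (b) of Theorem \ref{thm:abat2} is $L_k\gamma = 2^{6k}(c_1 + c_0\alpha^k)$ (note $L_k = 2^{6k}$, the positive generator of $\lambda^{6k}\cap\Z$, not $2^{6k-1}$ as your intermediate step suggests), which is not the rational scalar $2^{6k}C_k$ that the algorithm computes. So condition (b) does not ``read exactly as'' the test in Steps 7--9, and the equivalence of the two tests is precisely the nontrivial content to be supplied.

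The paper supplies it by bypassing Theorem \ref{thm:abat2} in the forward direction: from condition (i) of Assumption \ref{assump:frob_inv} it computes
$\tilde{E}(\order_H/\pe_k) \cong \order_K/(\pi_k - 1)\order_K \cong \Z/2^{6k}\Z \oplus \order_K/(c_1 + c_0\alpha^k)\order_K$,
observes that $2^{6k}C_k$ annihilates this group (here the hypothesis $2^{6k+1} \nmid \Norm_{K/\Q}(c_1 + c_0\alpha^k)$ is what makes the $2$-part of $\Norm_{K/\Q}(c_1+c_0\alpha^k)$ absorbable into $2^{6k}$), and that $[2^{6k-1}C_k]\tilde{P} \neq O_{\tilde{E}}$ because $\tilde{P} \notin [2]\tilde{E}(\order_H/\pe_k)$, which follows from condition (ii) of Assumption \ref{assump:frob_inv}. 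Your ``valuation bookkeeping'' remark points at this computation but does not carry it out, and your fallback (``adjusting $\gamma$ to $c_1 + c_0\alpha^k$'') does not resolve it either, since it changes the scalar away from the one in the algorithm. Replacing the appeal to Theorem \ref{thm:abat2} by this explicit group-structure computation would close the gap.
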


\begin{proof}
    Assume that $F_k$ is prime.
    Then $\pe_k$ is a prime ideal
    and we can define the reduction $\tilde{E}$ and $\tilde{P}$.
    From Lemma \ref{lem:cond_xi},
    we have $\Z/F_k\Z \cong \order_H/\pe_k$.
    Therefore, there exists a square root of $D$ modulo $F_k$
    and the condition in Step 2 holds.
    In Step 5, $r$ and $t$ satisfy that
    \begin{equation*}
        r \equiv \sqrt{D}
        \mbox{ and }
        t \equiv \xi \pmod{\pe_k}.
    \end{equation*}
    Therefore,
    through the isomorphism $\order_H/\pe_k \cong \Z/F_k\Z$,
    we can identify $E'$ with $\tilde{E}$ and $P'$ with $\tilde{P}$.

    Since the Frobenius endomorphism on $\tilde{E}$
    is $[\pi_k]$,
    we have
    \begin{align*}
        \tilde{E}(\order_H/\pe_k) &\cong \order_K/(\pi_k - 1)\order_K\\
            &= \order_K/(\alpha^{2k}(c_1 + c_0\alpha^k))\order_K\\
            &\cong \Z/2^{6k}\Z \oplus \order_K/(c_1 + c_0\alpha^k)\order_K.
    \end{align*}
    From this and the assumption that $2^{6k+1}$ does not divide $\Norm_{K/\Q}(c_1 + c_0\alpha^k)$,
    we have
    $[2^{6k}C_k]\tilde{P} = O_{\tilde{E}}$.
    In addition, $[2^{6k-1}C_k]\tilde{P} \neq O_{\tilde{E}}$
    since $\tilde{P} \notin [2]\tilde{E}(\order_H/\pe_K)$.
    This means that the condition in Step 8 does not hold
    and that the congruence in Step 9 holds.
    
    Conversely, we assume that Algorithm \ref{alg:main} returns \textsf{True}.
    This means that,
    for any prime factor $p$ of $F_k$,
    there exists a point
    in $E' \bmod{p}$ of order at least $2^{6k}$.

    Assume that $F_k$ is not prime.
    Let $p$ be the smallest prime factor of $F_k$.
    Then $\sqrt{F_k} \geq p$.
    By the Hasse's theorem \cite[Theorem V.1.1]{silverman2009arithmetic},
    the order of $(E' \bmod{p})(\Z/p\Z)$ is at most $(\sqrt{p} + 1)^2$.
    Therefore, we have
    \begin{align*}
        (F_k^{1/4} + 1)^2 &\geq (\sqrt{p} + 1)^2
            \geq 2^{6k} = \Norm_{K/\Q}\left(\frac{\pi_k - 1}{c_1 + c_0\alpha^k}\right)\\
            &= \frac{(\pi_k - 1)(\overline{\pi_k} - 1)}{\Norm_{K/\Q}(c_1 + c_0\alpha^k)}
            = \frac{F_k - \mathrm{Tr}_{K/\Q}(\pi_k) + 1}{\Norm_{K/\Q}(c_1 + c_0\alpha^k)}\\
            &\geq \frac{F_k - 2\sqrt{F_k} + 1}{\Norm_{K/\Q}(c_1 + c_0\alpha^k)}
            = \frac{(\sqrt{F_k} - 1)^2}{\Norm_{K/\Q}(c_1 + c_0\alpha^k)}.
    \end{align*}
    The final inequality comes from the fact that
    $\trace_{K/\Q}(\beta)^2 \leq 4\Norm_{K/\Q}(\beta)$ for any $\beta \in K$.
    From the above inequality, we have
    \begin{equation*}
        \Norm_{K/\Q}(c_1 + c_0\alpha^k)(F_k^{1/4} + 1)^2
        \geq (\sqrt{F_k} - 1)^2.
    \end{equation*}
    This contradicts Lemma \ref{lem:CF} with $C = \Norm_{K/\Q}(c_1 + c_0\alpha^k)$
    and $F = F_k$.
    Therefore, $F_k$ must be prime.
\end{proof}

\section{Examples}\label{sec:examples}
As mentioned above,
our algorithm can be applied to the following two cases.
\begin{enumerate}
    \item $K = \Q(\sqrt{-23})$ and $\xi$ is a root of $x^3 - x - 1$,
    \item $K = \Q(\sqrt{-31})$ and $\xi$ is a root of $x^3 + x + 1$.
\end{enumerate}
In these cases, the sequences $\{F_k\}$ determined the primality are
\begin{enumerate}
    \item $F_k = \Norm_{\Q(\sqrt{-23})/\Q}
        \left(1 - \left(\frac{3 + \sqrt{-23}}{2}\right)^{2k}
            - \left(\frac{3 + \sqrt{-23}}{2}\right)^{3k}
        \right)$,
    \item $F_k = \Norm_{\Q(\sqrt{-31})/\Q}
        \left(1 + \left(\frac{1 + \sqrt{-31}}{2}\right)^{2k}
            + \left(\frac{1 + \sqrt{-31}}{2}\right)^{3k}
        \right)$.
\end{enumerate}

In these cases,
the conditions on $k$ in Algorithm \ref{alg:main}
hold for $k \geq 2$.
In particular, 
we have the following lemmas.
\begin{lemma}\label{lem:Fk16N}
    Let $k \geq 2$ be an integer
    and $F_k$ one of the two integers defined above.
    We define
    \begin{itemize}
        \item $c_0 = c_1 = -1$ and $\alpha = \frac{3 + \sqrt{-23}}{2}$
            if $F_k$ is the sequence defined by (1),
        \item $c_0 = c_1 = 1$ and $\alpha = \frac{1 + \sqrt{-31}}{2}$
            if $F_k$ is the sequence defined by (2).
    \end{itemize}
    Then we have 
    \begin{equation}\label{eq:cond_norm}
        F_k > 16\Norm_{K/\Q}(c_1 + c_0\alpha^k)^2.
    \end{equation}
\end{lemma}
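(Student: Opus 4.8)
The plan is to rewrite the inequality in terms of complex absolute values and then reduce it to a single elementary polynomial inequality in one variable, treating both cases of the lemma uniformly.

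First I would use that for any $z \in K$ one has $\Norm_{K/\Q}(z) = z\bar z = |z|^2$, where $|\cdot|$ denotes the complex absolute value. Setting $\gamma_k = c_1 + c_0\alpha^k$, so that $\pi_k = 1 + \alpha^{2k}\gamma_k$, the desired bound $F_k > 16\Norm_{K/\Q}(\gamma_k)^2$ is equivalent to $|\pi_k|^2 > 16|\gamma_k|^4$, i.e. to
\[
    |\pi_k| > 4|\gamma_k|^2 .
\]
Two observations collapse the two cases into one. In both cases $\alpha$ generates $\lambda^3$ with $\lambda$ above the split prime $2$, so $|\alpha|^2 = \Norm_{K/\Q}(\alpha) = 8$ (which one also checks directly: $\tfrac{9+23}{4} = \tfrac{1+31}{4} = 8$). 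Moreover $c_0 = c_1 = -1$ in case (1) and $c_0 = c_1 = 1$ in case (2), so in either case $\gamma_k = \pm(1 + \alpha^k)$ and hence $|\gamma_k| = |1 + \alpha^k|$.

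Next I would apply the triangle inequality twice. From $\pi_k = 1 + \alpha^{2k}\gamma_k$ and $|\alpha|^{2k} = 8^k$ we get the lower bound $|\pi_k| \geq 8^k|\gamma_k| - 1$. From $|\alpha^k| = 8^{k/2} > 1$ we get the two-sided estimate $8^{k/2} - 1 \leq |\gamma_k| \leq 8^{k/2} + 1$. Substituting the lower estimate of $|\gamma_k|$ into $8^k|\gamma_k|$ and the upper estimate into $4|\gamma_k|^2$, it suffices to prove
\[
    8^k\bigl(8^{k/2} - 1\bigr) - 1 > 4\bigl(8^{k/2} + 1\bigr)^2 .
\]
Writing $u = 8^{k/2}$ (so $u^2 = 8^k$, and $u \geq 8$ exactly when $k \geq 2$), this is the cubic inequality $u^3 - 5u^2 - 8u - 5 > 0$. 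I would finish by noting that the left side equals $123$ at $u = 8$ and has positive derivative $3u^2 - 10u - 8$ for $u \geq 8$, so it is positive on $[8,\infty)$; chaining the estimates then gives $|\pi_k| > 4|\gamma_k|^2$ for all $k \geq 2$.

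There is no serious obstacle here: the argument is a short chain of triangle inequalities followed by a monotone one-variable estimate. The only points needing care are the uniform reductions in the second step — the identity $|\gamma_k| = |1 + \alpha^k|$ forced by the matching signs of $c_0, c_1$, and the common value $|\alpha|^2 = 8$ — and the check that the crude bound $|\pi_k| \geq 8^k|\gamma_k| - 1$ is already strong enough. It is worth remarking that the threshold $k \geq 2$ is sharp for this estimate, corresponding exactly to $u \geq 8$, which is why the lemma excludes $k = 1$.
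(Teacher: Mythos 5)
Your proof is correct. It takes a recognizably different, and in fact tidier, route than the paper's. The paper expands $F_k = \Norm_{K/\Q}(1+c_1\alpha^{2k}+c_0\alpha^{3k})$ into a sum of norms and traces, bounds each trace term via $\trace_{K/\Q}(\beta)^2 \le 4\Norm_{K/\Q}(\beta)$ to get $F_k > 8^{3k}-8\cdot 8^{2.5k}$ and $\Norm_{K/\Q}(c_1+c_0\alpha^k) < 2\cdot 8^k$, and the resulting comparison only closes for $k\ge 5$, so the cases $k=2,3,4$ have to be verified by direct computation. You instead pass to complex absolute values, reduce the claim to $|\pi_k| > 4|\gamma_k|^2$, and apply the triangle inequality to both $\pi_k = 1+\alpha^{2k}\gamma_k$ and $\gamma_k = \pm(1+\alpha^k)$; the resulting one-variable inequality $u^3-5u^2-8u-5>0$ with $u=8^{k/2}$ already holds at $u=8$, so the estimate covers all $k\ge 2$ uniformly with no residual finite check. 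The two arguments rest on the same archimedean input (your triangle inequality for $|1+z|$ is the same fact as the paper's trace--norm bound), but your bookkeeping is sharper because you keep $|\gamma_k|$ as a single quantity rather than re-expanding it, and because you exploit $c_0=c_1$ to write $\gamma_k=\pm(1+\alpha^k)$ exactly as the paper also does. What the paper's version buys is that the intermediate displays make the order of magnitude of $F_k$ (namely $8^{3k}$) explicit, which is reused informally elsewhere; what yours buys is the elimination of the $k=2,3,4$ verification and a visibly sharp threshold. One could splice your chain of estimates in as a drop-in replacement without changing anything else in the section.
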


\begin{proof}
    Let $K$ be $\Q(\sqrt{-23})$ in the case (1) or $\Q(\sqrt{-31})$ in the case (2).
    Note that we have $F_k = \Norm_{K/\Q}(1 + c_1\alpha^{2k} + c_0\alpha^{3k})$ in the both cases.

    For any $\beta \in K$, we have 
    $\trace_{K/\Q}(\beta)^2 \leq 4\Norm_{K/\Q}(\beta)$.
    Since
    $c_0, c_1 \in \{\pm 1\}$ and $\Norm_{K/\Q}(\alpha) = 8$, we have
    \begin{align*}
        F_k &= 8^{3k} + 8^{2k} + 1
            + c_1 \trace_{K/\Q}(\alpha^{2k}) + c_0 \trace_{K/\Q}(\alpha^{3k})
            + c_0c_1 8^{2k}\trace_{K/\Q}(\alpha^{k})\\
            &\geq 8^{3k} + 8^{2k} + 1
                - 4(8^{k} + 8^{1.5k} + 8^{2.5k})\\
            &> 8^{3k} - 8\cdot8^{2.5k}.
    \end{align*}
    On the other hand,
    \begin{align*}
        \Norm_{K/\Q}(c_1 + c_0\alpha^k) = \Norm_{K/\Q}(1 + \alpha^k) = 8^k + 1 + \trace_{K/\Q}(\alpha^k) < 2\cdot8^k.
    \end{align*}
    Therefore, \eqref{eq:cond_norm} holds if
    $8^{3k} - 8\cdot8^{2.5k} > 16(2\cdot8^{k})^2$.
    If $k \geq 5$ then
    this inequality holds since $8^{3k} > 72\cdot8^{2.5k}$.
    Computing explicitly both sides in \eqref{eq:cond_norm} for $k = 2, 3, 4$,
    we obtain our assertion.
\end{proof}

\begin{lemma}\label{lam:6k}
    Let $K$ be an imaginary quadratic field,
    $k$ be a positive integer, $c_0, c_1 \in \{\pm 1\}$.
    Let $\alpha \in \order_K$ such that $\Norm_{K/\Q}(\alpha) = 8$.
    Then $2^{6k}$ does not divide $N_{K/\Q}(c_0 + c_1\alpha^k)$.
\end{lemma}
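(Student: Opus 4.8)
The plan is to reduce the divisibility claim to an elementary size estimate: I would compute $\Norm_{K/\Q}(c_0 + c_1\alpha^k)$ in closed form, observe that it is a \emph{positive} integer, and bound it strictly below $2^{6k}$. A positive integer smaller than $2^{6k}$ cannot be divisible by $2^{6k}$, which is exactly the assertion.

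First I would expand the norm as a product over the two complex embeddings of $K$, using that $\bar\alpha$ is the complex conjugate of $\alpha$:
\[
    \Norm_{K/\Q}(c_0 + c_1\alpha^k)
        = (c_0 + c_1\alpha^k)(c_0 + c_1\bar\alpha^k)
        = c_0^2 + c_0c_1\,\trace_{K/\Q}(\alpha^k) + c_1^2\,\Norm_{K/\Q}(\alpha)^k.
\]
Since $c_0, c_1 \in \{\pm 1\}$ we have $c_0^2 = c_1^2 = 1$, and by hypothesis $\Norm_{K/\Q}(\alpha) = 8$, so the right-hand side equals $1 + c_0c_1\,\trace_{K/\Q}(\alpha^k) + 8^k$, which is a rational integer; call it $N$.

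Next I would control the middle term and establish positivity. Because $K$ is imaginary quadratic, $|\alpha|^2 = \Norm_{K/\Q}(\alpha) = 8$, so $|\alpha^k| = 8^{k/2} = 2^{3k/2}$, whence
\[
    \bigl|\trace_{K/\Q}(\alpha^k)\bigr| = |\alpha^k + \bar\alpha^k| \le 2\cdot 2^{3k/2}.
\]
Moreover $N = |c_0 + c_1\alpha^k|^2 > 0$: indeed $c_0 + c_1\alpha^k = 0$ would force $|\alpha^k| = 1$, contradicting $|\alpha^k| = 2^{3k/2} > 1$. Combining these gives $0 < N \le 1 + 2\cdot 2^{3k/2} + 2^{3k}$.

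Finally, bounding each of the three summands crudely by $2^{3k}$ (valid for $k \ge 1$, since then $2^{3k/2} \le 2^{3k}$ and $1 \le 2^{3k}$) yields $N \le 4\cdot 2^{3k} = 2^{3k+2}$. As $3k + 2 < 6k$ for every $k \ge 1$, we conclude $0 < N < 2^{6k}$, so $2^{6k}$ does not divide $N$, which proves the lemma. There is no genuine obstacle here; the only points worth recording are that the norm of a nonzero element of an imaginary quadratic field is \emph{strictly} positive (so that $0 < N$, not merely $|N| < 2^{6k}$, is legitimate) and that the hypotheses $\Norm_{K/\Q}(\alpha) = 8$ and $c_0, c_1 \in \{\pm1\}$ pin the dominant term down to $8^k = 2^{3k}$, keeping its exponent at $3k$ rather than $6k$.
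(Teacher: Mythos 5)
Your proposal is correct and follows essentially the same route as the paper: both reduce the claim to the size estimate $0 < \Norm_{K/\Q}(c_0 + c_1\alpha^k) < 2^{6k}$, bound the trace term by $2\cdot 2^{1.5k}$, and conclude since $2^{3k+2} < 2^{6k}$ for $k \ge 1$. Your explicit remark that the norm of a nonzero element is strictly positive is a point the paper leaves implicit, but it is the same argument.
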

\begin{proof}
    It suffices to show $N_{K/\Q}(c_0 + c_1\alpha^k) < 2^{6k}$.
    As in the proof of Lemma \ref{lem:Fk16N},
    we have $\trace_{K/\Q}(\alpha^k) \leq 2\sqrt{\Norm_{K/\Q}(\alpha^k)} = 2\cdot2^{1.5k}$.
    Therefore,
    \begin{equation*}
        N_{K/\Q}(c_0 + c_1\alpha^k) = 1 + 2^{3k} \pm\trace_{K/\Q}(\alpha^k)
            \leq (1 + 2^{1.5k})^2
            < (2\cdot2^{1.5k})^2 < 2^{6k}.
    \end{equation*}
\end{proof}

We implemented the primality proving algorithm in this paper for these sequences.
We used SageMath \cite{sagemath} for implementation and our code is available at
\url{https://github.com/hiroshi-onuki/ecpp_cm_h3}.

As in \S \ref{subsec:cond_k},
we can determine the sets $T_0$, $T_1$, and $T_2$ explicitly.
In particular, $T_0$ is determined by the discriminant of the elliptic curve $E$,
and $T_1$ and $T_2$ are determined by Algorithm \ref{alg:conditon_k}.
The set $T_0 \cap T_1 \cap T_2$ is determined modulo $1320$ in case (1),
and $7920$ in case (2)
for $E$'s that we used
(see \textsf{n23\_def\_symbols.sage} and \textsf{n31\_def\_symbols.sage} of our code
for the explicit construction).

We ran our primality proving algorithm 
for $k$ in $[2, 20000] \cap T_0 \cap T_1 \cap T_2$.
Our computation showed the following fact.

\begin{theorem}
    Let $k \in [2, 20000] \cap T_0 \cap T_1 \cap T_2$.
    \begin{itemize}
        \item $F_k$ defined by (1) is prime if and only if $k = 100, 120$, and $313$,
        \item $F_k$ defined by (2) is prime if and only if $k = 23, 191, 1350$, and $2071$.
    \end{itemize}
\end{theorem}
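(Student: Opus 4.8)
The statement is a finite computational verification, and the plan is to reduce it entirely to running the certified Algorithm~\ref{alg:main} on every admissible $k$ in the stated range. First I would establish that Algorithm~\ref{alg:main} is genuinely applicable to every $k \in [2, 20000] \cap T_0 \cap T_1 \cap T_2$. Its three preconditions are: membership in $T_0 \cap T_1 \cap T_2$, which holds by hypothesis; the norm inequality $F_k > 16\Norm_{K/\Q}(c_1 + c_0\alpha^k)^2$, which is precisely Lemma~\ref{lem:Fk16N} and holds for all $k \geq 2$ in both cases (1) and (2); and the $2$-divisibility condition on $\Norm_{K/\Q}(c_1 + c_0\alpha)$, which follows from Lemma~\ref{lam:6k} together with $\Norm_{K/\Q}(\alpha) = 8$. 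Thus all preconditions are met throughout the range, and the algorithm is well-defined on every $k$ under consideration.

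Given applicability, Theorem~\ref{thm:correctnes} guarantees that Algorithm~\ref{alg:main} returns \textsf{True} exactly when $F_k$ is prime and \textsf{False} otherwise. Consequently the set of $k$ in the range for which $F_k$ is prime coincides with the set of $k$ for which the algorithm outputs \textsf{True}, and the theorem reduces to reporting that output set. I would carry out the computation in two stages. First I would determine the eventually-periodic description of $T_0 \cap T_1 \cap T_2$ by applying Algorithm~\ref{alg:conditon_k} to $6\weber$ (for $T_1$) and to $\mathrm{disc}(f)$ (for $T_2$), obtaining the common moduli $1320$ in case (1) and $7920$ in case (2). Then I would enumerate the admissible residue classes, list the resulting $k \leq 20000$, and run Algorithm~\ref{alg:main} on each, recording every $k$ that yields \textsf{True}. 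Since the algorithm is a certified decision procedure, both directions of the ``if and only if'' follow simultaneously.

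The main obstacle here is the faithful and complete execution of the computation rather than any conceptual difficulty. Concretely, one must verify that $T_1$ and $T_2$ have been computed correctly, which depends on the correct determination of $\weber$ (including the resolution of its sign, carried out via the auxiliary primes $p = 59$ and $p = 47$), and on the correct computation of the V\'elu isogeny polynomial $f$ and its discriminant $\mathrm{disc}(f)$; it must also be checked that the enumeration covers every element of $[2, 20000] \cap T_0 \cap T_1 \cap T_2$ with no omission. The remaining content is then simply the recorded output of the SageMath implementation, which returns \textsf{True} exactly at $k = 100, 120, 313$ in case (1) and at $k = 23, 191, 1350, 2071$ in case (2), establishing the claim.
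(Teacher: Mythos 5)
Your proposal matches the paper's argument: the paper likewise establishes the preconditions of Algorithm~\ref{alg:main} for all $k \geq 2$ via Lemmas~\ref{lem:Fk16N} and~\ref{lam:6k}, relies on Theorem~\ref{thm:correctnes} for correctness, and then reports the output of the implemented computation over $[2,20000] \cap T_0 \cap T_1 \cap T_2$. This is essentially the same approach, correctly executed.
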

The computations were performed on a single core of an Apple M1 CPU
and took about one week in both cases. 

\subsection*{Acknowledgements}
The author would like to thank the anonymous reviewers for their useful feedback.
This research was in part conducted under a contract of
``Research and development on new generation cryptography for secure wireless communication services''
among
``Research and Development for Expansion of Radio Wave Resources (JPJ000254)'',
which was supported by the Ministry of Internal Affairs and Communications, Japan.

\bibliographystyle{amsplain}
\bibliography{elliptic_curve,isog}

\end{document}